\newcommand{\shrinkmargins}[1]{
  \addtolength{\textheight}{#1\topmargin}
  \addtolength{\textheight}{#1\topmargin}
  \addtolength{\textwidth}{#1\oddsidemargin}
  \addtolength{\textwidth}{#1\evensidemargin}
  \addtolength{\topmargin}{-#1\topmargin}
  \addtolength{\oddsidemargin}{-#1\oddsidemargin}
  \addtolength{\evensidemargin}{-#1\evensidemargin}
  }
\newtheorem{theorem}{Theorem}
\newtheorem{lemma}[theorem]{Lemma}
\newtheorem{corollary}[theorem]{Corollary}
\newtheorem*{theorem*}{Theorem}
{Claim}
\theoremstyle{definition}
\theoremstyle{remark}
\numberwithin{theorem}{section} \numberwithin{equation}{section}
\newcommand{\N}{\mathbb{N}}
\def\func#1{\mathop{\rm #1}}%
\begin{document}
\title{On a mod $3$ property of $\ell $-tuples of pairwise commuting permutations}
\author{Abdelmalek Abdesselam}
\address{Department of Mathematics, P. O. Box 400137, University of Virginia, Charlottesville, VA 22904-4137, USA}
\email{malek@virginia.edu}
\author{Bernhard Heim}
\address{Department of Mathematics and Computer Science\\Division of Mathematics\\University of Cologne\\ Weyertal 86-90 \\ 50931 Cologne \\Germany}
\address{Lehrstuhl A f\"{u}r Mathematik, RWTH Aachen University, 52056 Aachen, Germany}
\email{bheim@uni-koeln.de}
\author{Markus Neuhauser}
\address{Kutaisi International University, 5/7, Youth Avenue,  Kutaisi, 4600 Georgia}
\address{Lehrstuhl A f\"{u}r Mathematik, RWTH Aachen University, 52056 Aachen, Germany}
\email{markus.neuhauser@kiu.edu.ge}
\subjclass[2020] {Primary 05A17, 11P82; Secondary 05A20}
\keywords{Generating functions, log-concavity, partition numbers, symmetric group.}
\begin{abstract}
Let $S_n$ denote the symmetric group of permutations acting on $n$ elements.
We
investigate the
double sequence $\{N_{\ell}(n)\}$
counting the number
of $\ell$ tuples of elements of the symmetric group $S_n$, where the components commute, normalized by the order of $S_n$.
Our focus lies on exploring log-concavity with respect to $n$:
$$ N_{\ell}(n)^2 - N_{\ell}(n-1) \,\, N_{\ell}(n+1) \geq 0.$$
We
establish that this depends on $n \pmod{3}$ for sufficiently large $\ell$.
These numbers are studied by Bryan and Fulman as the $n$th orbifold
characteristics, generalizing work of Macdonald and
Hirzebruch--Hofer concerning the ordinary
and string-theoretic Euler characteristics of symmetric products.
Notably, $N_2(n)$ represents the partition numbers $p(n)$,
while $N_{3}(n)$ represents
the number of non-equivalent $n$-sheeted coverings of a torus studied by 
Liskovets and Medynkh. The numbers also appear in algebra since
$ \vert S_n \vert \,\, N_{\ell}(n) = \left\vert \func{Hom}\left( \mathbb{Z}^{\ell},S_n\right) \right\vert $.
\end{abstract}
\maketitle
\newpage
\section{Introduction and main results}
Let $S_n$ denote the symmetric group on $n$ elements.
This paper delves into
the log-concavity \cite{Br15} of commuting $\ell$-tuples of permutations in $S_n$.
Let
\begin{equation*}
C_{\ell,n}:= \Big\{ (\pi_1, \ldots, \pi_{\ell}) \in S_n^{\ell}\, : \,
\pi_j \pi_k = \pi_k \pi_j \text{ for } 1 \leq j,k \leq \ell \Big\}.
\end{equation*}
Let $n \geq 20$.
We
establish that the normalized number of elements
\begin{equation*}
N_{\ell}(n):= \frac{\vert C_{\ell,n} \vert}{\vert S_n\vert} \in \mathbb{N}
\end{equation*}
is log-concave with respect to $n$ at $n$, for almost all $\ell$ (i.~e.\ for all but finitely many), if
and only if $n$ is divisible by $3$.
This
double sequence $\{N_{\ell}(n)\}$
emerges as the specialization of the $\ell$th orbifold characteristic to the $n$th symmetric product of a manifold of ordinary Euler characteristic one,
as demonstrated in the work of Bryan and Fulman \cite{BF98}.
Let $g_{\ell}(n)$ be the number of subgroups of $\mathbb{Z}^{\ell}$ of index $n$.
Then:

\begin{theorem}[Bryan and Fulman, Theorem 2.1]
\label{th:BF}
	For $\ell \in \N$, we have
	\begin{equation*} 
		\sum_{n=0}^{\infty} P_n^{g_{\ell}}(x) \, t^n  =
		\prod_{n=1}^{\infty} \left( 1-t^n\right)^{- x \, g_{\ell-1}(n)} =
		\exp \left( x\, \sum_{n=1}^{\infty} g_{\ell}(n)  \frac{t^n}{n}\right),
	\end{equation*}
	where $P_n^{g_{\ell}}(x)$ is a polynomial of degree $n$ with $P_n^{g_{\ell}}(1)= N_{\ell}(n)$.
\end{theorem}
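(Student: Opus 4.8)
The plan is to \emph{define} $P_n^{g_\ell}(x)$ as the coefficient of $t^n$ in the rightmost exponential and then verify three assertions: that this exponential equals the displayed infinite product; that $P_n^{g_\ell}$ is a polynomial in $x$ of degree exactly $n$; and that $P_n^{g_\ell}(1)=N_\ell(n)$. The equality of the two closed forms will reduce to an elementary identity relating $g_\ell$ to $g_{\ell-1}$; the evaluation at $x=1$ is the one genuinely combinatorial input and will come from an orbit-decomposition argument; polynomiality is then automatic.

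First I would prove the arithmetic identity
\[ g_\ell(N)=\sum_{d\mid N}d\,g_{\ell-1}(d)\qquad(N\ge1), \]
with the convention $g_0(1)=1$ and $g_0(d)=0$ for $d>1$. Writing $\Z^\ell=\Z^{\ell-1}\times\Z$, I attach to an index-$N$ subgroup $H$ the data $\bigl(\pi(H)=a\Z,\ K:=H\cap(\Z^{\ell-1}\times\{0\})\bigr)$, where $\pi$ is projection to the last coordinate; multiplicativity of the index in $0\to K\to H\to a\Z\to0$ forces $N=a\,[\Z^{\ell-1}:K]$, so with $d:=[\Z^{\ell-1}:K]$ one gets $a=N/d$ and $d\mid N$. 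Conversely, for fixed $d\mid N$ and fixed $K$ of index $d$ in $\Z^{\ell-1}$ (there are $g_{\ell-1}(d)$ of these), the subgroups with this data are exactly $\langle K,v\rangle$ with $v\in\Z^{\ell-1}\times\{N/d\}$, two such $v$ giving the same $H$ iff they differ by an element of $K$; hence there are $[\Z^{\ell-1}:K]=d$ of them, and summing over $d$ gives the identity. Granting it, I take $\log$ of the product, expand $-\log(1-t^m)=\sum_{k\ge1}t^{mk}/k$, collect the coefficient of $t^N$ as $\tfrac{x}{N}\sum_{d\mid N}d\,g_{\ell-1}(d)=\tfrac{x}{N}g_\ell(N)$, and exponentiate to recover $\exp\!\bigl(x\sum_{N\ge1}g_\ell(N)t^N/N\bigr)$. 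Polynomiality is then immediate: in $\exp\!\bigl(x\sum_{d\ge1}c_dt^d\bigr)=\sum_{k\ge0}\tfrac{x^k}{k!}\bigl(\sum_{d\ge1}c_dt^d\bigr)^k$ with $c_d=g_\ell(d)/d$, only $k=0,\dots,n$ contribute to $t^n$, and the coefficient of $x^n$ is $c_1^{\,n}/n!=1/n!$ since $g_\ell(1)=1$, so $\deg P_n^{g_\ell}=n$ exactly.

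For $P_n^{g_\ell}(1)=N_\ell(n)$ I would use that $C_{\ell,n}$ is in bijection with $\func{Hom}(\Z^\ell,S_n)$, i.e.\ with the $\Z^\ell$-actions on $\{1,\dots,n\}$, so $N_\ell(n)=|\func{Hom}(\Z^\ell,S_n)|/n!$. Each such action is the disjoint union of its orbits, every orbit a transitive $\Z^\ell$-set, and conversely; a transitive $\Z^\ell$-action on a fixed $d$-element set is the choice of a point stabilizer $H$ (a subgroup of index $d$) together with an identification $\Z^\ell/H\cong\{1,\dots,d\}$ modulo the automorphisms of the $\Z^\ell$-set $\Z^\ell/H$, which form $\Z^\ell/H$ itself since $\Z^\ell$ is abelian, so there are $g_\ell(d)\cdot d!/d=g_\ell(d)(d-1)!$ of them. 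The exponential formula for labelled structures then gives $\sum_{n\ge0}N_\ell(n)t^n=\exp\!\bigl(\sum_{d\ge1}\tfrac{g_\ell(d)(d-1)!}{d!}t^d\bigr)=\exp\!\bigl(\sum_{d\ge1}g_\ell(d)t^d/d\bigr)$, and reading off $t^n$-coefficients finishes.

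The only non-formal ingredients are the subgroup bijection behind the arithmetic identity and the orbit count; everything else is formal power-series bookkeeping. I expect the orbit normalization — dividing by $|\Z^\ell/H|=d$, not by $d!$ or by $1$ — to be the easiest point to get wrong, so I would cross-check against $\ell=1$ (where $N_1(n)=1$ and the product is $(1-t)^{-x}$) and $\ell=2$ (where $N_2(n)=p(n)$ and the product is $\prod_m(1-t^m)^{-x}$).
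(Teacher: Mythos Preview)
Your argument is correct and self-contained. Note, however, that the paper does not give its own proof of this theorem: it is quoted as Theorem~2.1 of Bryan and Fulman \cite{BF98}, with further references to \cite{ABDV23} and \cite{Wh13}, so there is nothing in the paper to compare your approach against. The one ingredient you establish that the paper also records (without proof, in its Preliminaries) is the recursion $g_\ell(n)=\sum_{d\mid n}d\,g_{\ell-1}(d)$; your subgroup-extension bijection is the standard proof of it. The orbit-counting step and the exponential-formula derivation of $\sum_n N_\ell(n)t^n=\exp\bigl(\sum_d g_\ell(d)t^d/d\bigr)$ are likewise the expected route, and your normalization check (automorphism group of $\Z^\ell/H$ has order $d$, giving $g_\ell(d)(d-1)!$ transitive labelled actions) is right.
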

We also refer to Abdesselam et al.\ (\cite{ABDV23}, p. 3) and White \cite{Wh13}.

In the
instance where
$\ell=2$, we
discover that the polynomials correspond to
the D'Arcais polynomials \cite{DA13},
known in combinatorics as the (shifted) Nekrasov--Okounkov polynomials \cite{NO06, Ha10}. Further,
$P_{n}^{g_{\ell}}(1)=N_{2}(n)=p(n)$, where $p(n)$ 
denotes the number of partitions of $n$ (see \cite{An98, On04}).
Additionally, 
$N_{2}(n)$ equals
the number of conjugacy classes in $S_n$
\cite{ET68}.
Moving forward, $\ell =3$ gives a connection to topology 
(see the introduction of Britnell \cite{Br13}) due to work of
Liskovets and Medynkh \cite{LM09}. It
has been proven that $N_{3}(n)$ counts the number
of non-equivalent $n$-sheeted coverings of a torus.
Moreover, there is a direct connection
to algebra and geometry highlighted by White \cite{Wh13}. Obvious
bijections exist between the elements of $C_{\ell,n}$
and $\func{Hom} \left( \mathbb{Z}^{\ell}, S_n\right) $ and the number of actions of the group
$\mathbb{Z}^{\ell}$ on a set $X$ of $n$ labeled elements.

We call
the double sequence $\{N_{\ell}(n)\}$ log-concave at $n$ for $\ell$, if
\begin{equation*}\label{D}
\Delta_{\ell}(n):= N_{\ell}(n)^2 - N_{\ell}(n-1) \, N_{\ell}(n+1) \geq 0.
\end{equation*}
If $\Delta_{\ell}(n)<0$,
we
call the pair $(n,\ell)$ an exception (or strictly log-concave). In Section \ref{section1.1},
Table \ref{table1} and Table \ref{table2} we recorded all exceptions for 
\begin{equation*}
1 \leq n \leq 30 \text{ and } 1 \leq \ell \leq 40.
\end{equation*}
\newpage
\subsection{\label{section1.1}Landscape of exceptions}
\
\begin{table}[H]
\[
\begin{array}{rccccccccccccccccccccc}
\hline
n\backslash \ell &0&1&2&3&4&5&6&7&8&9&10&11&12&13&14&15&16&17&18&19&20\\ \hline \hline
1&&&\bullet &\bullet &\bullet &\bullet &\bullet &\bullet &\bullet &\bullet &\bullet &\bullet &\bullet &\bullet &\bullet &\bullet &\bullet &\bullet &\bullet &\bullet &\bullet \\
2&&&&&&&&&&&&&&&&&&&&&\\
3&&&\bullet &\bullet &\bullet &\bullet &\bullet &\bullet &\bullet &\bullet &\bullet &\bullet &\bullet &\bullet &&&&&&&\\
4&&&&&&&&&&&&&&&&&\bullet &\bullet &\bullet &\bullet &\bullet \\
5&&&\bullet &\bullet &\bullet &\bullet &\bullet &\bullet &\bullet &\bullet &\bullet &\bullet &\bullet &\bullet &\bullet &\bullet &\bullet &\bullet &\bullet &\bullet &\\
6&&&&&&&&&&&&&&&&&&&&&\\
7&&&\bullet &\bullet &\bullet &\bullet &\bullet &\bullet &\bullet &\bullet &\bullet &\bullet &\bullet &\bullet &\bullet &\bullet &\bullet &\bullet &\bullet &\bullet &\bullet \\
8&&&&&&&&&&&&&&&&\bullet &\bullet &\bullet &\bullet &\bullet &\bullet \\
9&&&\bullet &\bullet &\bullet &\bullet &\bullet &\bullet &\bullet &\bullet &\bullet &\bullet &\bullet &&&&&&&&\\
10&&&&&&&&&&&&&&&&&&&&&\\
11&&&\bullet &\bullet &\bullet &\bullet &\bullet &\bullet &\bullet &\bullet &\bullet &\bullet &\bullet &\bullet &\bullet &\bullet &\bullet &\bullet &\bullet &\bullet &\bullet \\
12&&&&&&&&&&&&&&&&&&&&&\\
13&&&\bullet &\bullet &\bullet &\bullet &\bullet &\bullet &\bullet &\bullet &&&&&&&&&&&\\
14&&&&&&&&&&&&&&&&&&&&&\bullet \\
15&&&\bullet &\bullet &\bullet &\bullet &\bullet &\bullet &\bullet &&&&&&&&&&&&\\
16&&&&&&&&&&&&&&&&&&&&&\\
17&&&\bullet &\bullet &\bullet &\bullet &\bullet &&&&&&&&&&&&&&\\
18&&&&&&&&&&&&&&&&&&&&&\\
19&&&\bullet &\bullet &\bullet &&&&&&&&&&&&&&&&\\
20&&&&&&&&&&&&&&&&&&&&&\\
21&&&\bullet &\bullet &&&&&&&&&&&&&&&&&\\
22&&&&&&&&&&&&&&&&&&&&&\\
23&&&\bullet &&&&&&&&&&&&&&&&&&\\
24&&&&&&&&&&&&&&&&&&&&&\\
25&&&\bullet &&&&&&&&&&&&&&&&&&\\
26&&&&&&&&&&&&&&&&&&&&&\\
27&&&&&&&&&&&&&&&&&&&&&\\
28&&&&&&&&&&&&&&&&&&&&&\\
29&&&&&&&&&&&&&&&&&&&&&\\
30&&&&&&&&&&&&&&&&&&&&&\\ \hline
\end{array}
\]
\caption{\label{table1}Exceptions for $N_{\ell}(n)$ for $0\leq \ell \leq 20$ and $1\leq n\leq 30$}
\end{table}
\newpage

\begin{table}[H]
\[
\begin{array}{rccccccccccccccccccccc}
\hline
n\backslash \ell &20&21&22&23&24&25&26&27&28&29&30&31&32&33&34&35&36&37&38&39&40\\ \hline \hline
1&\bullet &\bullet &\bullet &\bullet &\bullet &\bullet &\bullet &\bullet &\bullet &\bullet &\bullet &\bullet &\bullet &\bullet &\bullet &\bullet &\bullet &\bullet &\bullet &\bullet &\bullet \\
2&&&&&&&&&&&&&&&&&&&&&\\
3&&&&&&&&&&&&&&&&&&&&&\\
4&\bullet &\bullet &\bullet &\bullet &\bullet &\bullet &\bullet &\bullet &\bullet &\bullet &\bullet &\bullet &\bullet &\bullet &\bullet &\bullet &\bullet &\bullet &\bullet &\bullet &\bullet \\
5&&&&&&&&&&&&&&&&&&&&&\\
6&&&&&&&&&&&&&&&&&&&&&\\
7&\bullet &\bullet &\bullet &\bullet &\bullet &\bullet &\bullet &\bullet &\bullet &\bullet &\bullet &\bullet &\bullet &\bullet &\bullet &\bullet &\bullet &\bullet &\bullet &\bullet &\bullet \\
8&\bullet &\bullet &\bullet &\bullet &\bullet &\bullet &\bullet &&&&&&&&&&&&&&\\
9&&&&&&&&&&&&&&&&&&&&&\\
10&&&\bullet &\bullet &\bullet &\bullet &\bullet &\bullet &\bullet &\bullet &\bullet &\bullet &\bullet &\bullet &\bullet &\bullet &\bullet &\bullet &\bullet &\bullet &\bullet \\
11&\bullet &\bullet &\bullet &\bullet &\bullet &\bullet &\bullet &\bullet &\bullet &\bullet &\bullet &\bullet &\bullet &\bullet &\bullet &\bullet &&&&&\\
12&&&&&&&&&&&&&&&&&&&&&\\
13&&&&&&&\bullet &\bullet &\bullet &\bullet &\bullet &\bullet &\bullet &\bullet &\bullet &\bullet &\bullet &\bullet &\bullet &\bullet &\bullet \\
14&\bullet &\bullet &\bullet &\bullet &\bullet &\bullet &\bullet &\bullet &\bullet &\bullet &\bullet &\bullet &\bullet &\bullet &\bullet &\bullet &\bullet &\bullet &\bullet &\bullet &\bullet \\
15&&&&&&&&&&&&&&&&&&&&&\\
16&&&&&&&&&&\bullet &\bullet &\bullet &\bullet &\bullet &\bullet &\bullet &\bullet &\bullet &\bullet &\bullet &\bullet \\
17&&&&&\bullet &\bullet &\bullet &\bullet &\bullet &\bullet &\bullet &\bullet &\bullet &\bullet &\bullet &\bullet &\bullet &\bullet &\bullet &\bullet &\bullet \\
18&&&&&&&&&&&&&&&&&&&&&\\
19&&&&&&&&&&&&&\bullet &\bullet &\bullet &\bullet &\bullet &\bullet &\bullet &\bullet &\bullet \\
20&&&&&&&&\bullet &\bullet &\bullet &\bullet &\bullet &\bullet &\bullet &\bullet &\bullet &\bullet &\bullet &\bullet &\bullet &\bullet \\
21&&&&&&&&&&&&&&&&&&&&&\\
22&&&&&&&&&&&&&&&\bullet &\bullet &\bullet &\bullet &\bullet &\bullet &\bullet \\
23&&&&&&&&&&\bullet &\bullet &\bullet &\bullet &\bullet &\bullet &\bullet &\bullet &\bullet &\bullet &\bullet &\bullet \\
24&&&&&&&&&&&&&&&&&&&&&\\
25&&&&&&&&&&&&&&&&&\bullet &\bullet &\bullet &\bullet &\bullet \\
26&&&&&&&&&&&&&\bullet &\bullet &\bullet &\bullet &\bullet &\bullet &\bullet &\bullet &\bullet \\
27&&&&&&&&&&&&&&&&&&&&&\\
28&&&&&&&&&&&&&&&&&&&\bullet &\bullet &\bullet \\
29&&&&&&&&&&&&&&\bullet &\bullet &\bullet &\bullet &\bullet &\bullet &\bullet &\bullet \\
30&&&&&&&&&&&&&&&&&&&&&\\ \hline
\end{array}
\]
\caption{\label{table2}Exceptions for $N_{\ell}(n)$ for $20\leq \ell \leq 40$ and $1\leq n\leq 30$}
\end{table}

\subsection{Log-concavity of $\left\{ N_{\ell }\left( n\right) \right\} $ at $n$ for fixed $n$}
We state our main result.
\begin{theorem}\label{th:main}
Let $n\geq 2$. Then $\{N_{\ell}(n)\}$ is log-concave at $n$ for almost all
$\ell$, if $n \equiv 0 \pmod{3}$. Further, let $n\equiv 1 \pmod{3}$. Then
$\{N_{\ell}(n)\}$ is strictly log-convex at $n$ for almost all $\ell$. 
If $n\equiv 2 \pmod{3}$,
then $\{N_{\ell}(n)\}$ is
log-concave at $n$ for almost all $\ell$, if $n<20$ and strictly log-convex
at $n$ for allmost all $\ell $, if $n \geq 20$.
\end{theorem}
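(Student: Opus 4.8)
The plan is to reduce the assertion of Theorem~\ref{th:main} to the sign of a single rational number attached to an exponential polynomial in $\ell$. Putting $x=1$ in Theorem~\ref{th:BF} gives $\sum_{n\ge 0}N_\ell(n)t^n=\exp\bigl(\sum_{m\ge 1}\tfrac{g_\ell(m)}{m}t^m\bigr)$, whence
\[
N_\ell(n)=\sum_{\lambda\vdash n}\ \prod_{m\ge 1}\frac{1}{r_m!}\Bigl(\frac{g_\ell(m)}{m}\Bigr)^{r_m},
\]
with $r_m=r_m(\lambda)$ the multiplicity of the part $m$. The first step is to prove that, for each fixed $m$, as a function of $\ell$ one has $g_\ell(m)=\sum_{d\mid m}\gamma_{m,d}\,d^{\,\ell}$ with rationals $\gamma_{m,d}$ independent of $\ell$ and $\gamma_{m,m}>0$: for a prime power this follows inductively from the recursion $g_{\ell+1}(p^a)=\sum_{b=0}^{a}p^{(a-b)\ell}g_\ell(p^b)$ (geometric summation in $\ell$ produces only the exponentials $p^{e\ell}$, $0\le e\le a$, and the top coefficient stays positive), and the general case follows from the multiplicativity $g_\ell(\prod p_i^{a_i})=\prod_i g_\ell(p_i^{a_i})$. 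Feeding this into the displayed identity shows that $N_\ell(n)$ is itself a \emph{finite} exponential polynomial $N_\ell(n)=\sum_e\beta_{n,e}\,e^{\,\ell}$ with rationals $\beta_{n,e}$ independent of $\ell$; hence so is $\Delta_\ell(n)$, and its sign for all sufficiently large $\ell$ equals the sign of the coefficient of its largest exponential.

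The combinatorial input is the maximum-product function $M(n):=\max\{\prod_m m^{r_m}:(m^{r_m})\vdash n\}$. Since a partition $\lambda$ contributes to $N_\ell(n)$ only exponentials $e^\ell$ with $e\le\prod_m m^{r_m}$, equality being attained with a positive coefficient by choosing $d=m$ in every factor (here $\gamma_{m,m}>0$ is used), the largest exponential occurring in $N_\ell(n)$ is $M(n)^\ell$, with $\beta_{n,M(n)}>0$. The classical solution of the maximum-product partition problem gives $M(3k)=3^k$, $M(3k+1)=2^2\cdot 3^{k-1}$ for $k\ge1$, $M(3k+2)=2\cdot3^k$ (and $M(1)=1$), the maximizing partition being unique, namely $(3^k)$ or $(3^{k-1},2)$, when $n\equiv 0,2\pmod 3$, and consisting of exactly the two partitions $(3^{k-1},2^2)$ and $(3^{k-1},4)$ when $n=3k+1\ge4$. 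Comparing $M(n)^2$ with $M(n-1)M(n+1)$ settles the non-degenerate cases: for $n\equiv0\pmod3$ one has $M(n)^2=3^{2n/3}>\tfrac89 3^{2n/3}=M(n-1)M(n+1)$, so the top coefficient of $\Delta_\ell(n)$ is $\beta_{n,M(n)}^2>0$ and $\{N_\ell(n)\}$ is log-concave at $n$ for large $\ell$; for $n\equiv 1\pmod 3$, $n\ge4$, one has $M(n)^2=\tfrac{16}{9}3^{2k}<2\cdot3^{2k}=M(n-1)M(n+1)$, so the top coefficient is $-\beta_{n-1,M(n-1)}\beta_{n+1,M(n+1)}<0$ and the sequence is strictly log-convex; and for $n=2$, $M(2)^2=4>3=M(1)M(3)$, again log-concave.

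The delicate case is $n=3k+2$ with $k\ge1$, where $M(n)^2=4\cdot 9^k=M(n-1)M(n+1)$: the two leading exponentials coincide, so one must examine $\beta_{n,M(n)}^2-\beta_{n-1,M(n-1)}\beta_{n+1,M(n+1)}$. Using the uniqueness/count of maximizing partitions above and the explicit values $\gamma_{2,2}=1$, $\gamma_{3,3}=\tfrac12$, $\gamma_{4,4}=\tfrac23$, one computes
\[
\beta_{n,M(n)}=\frac{1}{2\,k!\,6^{k}},\qquad
\beta_{n+1,M(n+1)}=\frac{1}{(k+1)!\,6^{k+1}},\qquad
\beta_{n-1,M(n-1)}=\frac{7}{24\,(k-1)!\,6^{k-1}},
\]
so that the top coefficient of $\Delta_\ell(n)$ equals $\dfrac{6-k}{24\,(k+1)\,(k!)^2\,36^{k}}$. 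This is positive for $k\le5$ (that is, $5\le n\le 17$) and negative for $k\ge7$ (that is, $n\ge23$), yielding log-concavity, respectively strict log-convexity, at $n$ for large $\ell$. Exactly at $n=20$ ($k=6$) this coefficient vanishes, and one must descend one further level: list the partitions of $19,20,21$ realizing the two largest values of $\prod_m m^{r_m}$ (for $20$ these are $(3^6,2)$ together with the three partitions of product $1296$; for $19$ the two of product $972$ and the three of product $864$; for $21$ the partition $(3^7)$ and the two of product $1944$), compute the corresponding next coefficients, and verify that the coefficient of the exponential $(2^5 3^{10})^\ell$ in $\Delta_\ell(20)$, namely $2\beta_{20,1458}\beta_{20,1296}-\beta_{19,972}\beta_{21,1944}-\beta_{19,864}\beta_{21,2187}$, is negative; this gives strict log-convexity at $20$ for large $\ell$ and completes the case $n\equiv 2\pmod3$.

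Assembling the three residue classes, for every $n\ge2$ the sign of $\Delta_\ell(n)$ is eventually constant in $\ell$, equal to $+$ when $n\equiv0\pmod3$, to $-$ when $n\equiv1\pmod3$, and to $+$ if $n\equiv2\pmod3$ and $n<20$ but $-$ if $n\equiv2\pmod3$ and $n\ge20$; the phrase ``for almost all $\ell$'' absorbs the finitely many $\ell$ below the threshold. The main obstacle is the degenerate value $n=20$: once the first-order coefficient $\propto 6-k$ dies, one must carry the expansions of $N_\ell(19)$, $N_\ell(20)$, $N_\ell(21)$ to second order, which forces one to keep track of \emph{all} partitions attaining the two largest product-values and of \emph{all} divisor choices $d\mid m$ inside every factor $(g_\ell(m)/m)^{r_m}$, and then to match up precisely which pairs of exponentials multiply to $(2^5 3^{10})^\ell$ in $N_\ell(20)^2$ and in $N_\ell(19)N_\ell(21)$ simultaneously; the computation is finite and explicit but unforgiving. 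Establishing the exponential-polynomial structure and the values of $M(n)$ is routine by comparison.
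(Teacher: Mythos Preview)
Your argument is correct and follows essentially the same route as the paper: both hinge on writing $N_\ell(n)$ as a sum over partitions, identifying the dominant exponential as $M(n)^\ell$ with $M(n)$ the maximal product of parts, comparing $M(n)^2$ with $M(n-1)M(n+1)$, and---in the tied case $n\equiv 2\pmod 3$---computing the leading coefficients (your $(6-k)/[24(k+1)(k!)^2 36^k]$ is exactly the paper's ratio $\tfrac{6}{7}\tfrac{n+1}{n-2}$ rewritten), with a further descent at $n=20$. The only noteworthy difference is packaging: you set things up uniformly via the exponential-polynomial structure $g_\ell(m)=\sum_{d\mid m}\gamma_{m,d}d^\ell$, whereas the paper uses cruder two-sided bounds for $n\equiv 0,1\pmod 3$ (which has the side benefit of producing the explicit thresholds in Corollary~\ref{lineareschranken}); for $n=20$ the paper carries the second-order computation to an explicit negative constant, which you leave as a claimed verification.
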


Through an in depth examination
of the proof, we obtain the following:

\begin{corollary}
\label{lineareschranken}The possible bounds
can be made explicit. Let
\begin{equation*}
\kappa
\left( n\right)
:= \left\{
\begin{array}{ll}
1+8\left( \frac{\left( \left( n/3\right) !\right) ^{2}p\left( n-1\right) p\left( n+1\right) }{3^{-2n/3}}-1\right) ,&n\equiv 0\pmod 3,\\
1+8\left( \left( \left( \frac{n-1}{3}\right) !p\left( n\right) \right) ^{2}2\cdot 3^{2\left( n-1\right) /3}-1\right) ,&n\equiv 1\pmod 3,\\
2,&n=2,\\
1+8\left( 71\cdot 2^{\left( n+1\right) /3}\left( \frac{n+1}{3}\right) !p\left( n+1\right) -1\right) ,&n\equiv 2\pmod 3,\\
&2<n\leq 17,\\
1+15\left( \frac{3!6!7!2^{10}}{211}\left( \left( \frac{1}{6!2^{6}}+p\left( 20\right) \right) ^{2}+\frac{p\left( 20\right) }{6!2^{5}}\right) -1\right) ,&n=20,\\
1+8\left( 97\cdot 2^{\left( n-2\right) /3}\cdot \left( \frac{n-2}{3}\right) !p\left( n\right) -1\right) ,&n\equiv 2\pmod 3,\\
&n\geq 23.
\end{array}
\right.
\end{equation*}
Then $\Delta_{\ell}(n) >0$ for $n\equiv 0 \pmod{3}$ and $ \ell \geq \kappa
(n)$,
and $\Delta_{\ell}(n) <0$ for $n\equiv 1 \pmod{3}$ and $ \ell \geq \kappa
(n)$.
Moreover, let $n \geq 20$ and $n \equiv 2 \pmod{3}$, then $\Delta_{\ell}(n) <0$ for
$\ell \geq \kappa \left( n\right) $.
\end{corollary}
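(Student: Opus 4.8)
The plan is to revisit the proof of Theorem~\ref{th:main} and carry it out with every constant made explicit. Throughout one works with the expansion equivalent to Theorem~\ref{th:BF},
\[
N_{\ell}(n)=\sum_{\lambda\vdash n}\frac{1}{z_{\lambda}}\prod_{j\in\lambda}g_{\ell}(j),\qquad
z_{\lambda}=\prod_{i\ge 1}i^{m_{i}(\lambda)}\,m_{i}(\lambda)!,
\]
where $g_{\ell}$ is multiplicative with $g_{\ell}(p^{a})=h_{a}(1,p,\dots,p^{\ell-1})$, so that $g_{\ell}(1)=1$, $g_{\ell}(2)=2^{\ell}-1$, $g_{\ell}(3)=\tfrac{3^{\ell}-1}{2}$ and $g_{\ell}(4)=\tfrac{(2^{\ell}-1)(2^{\ell+1}-1)}{3}$. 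The first step is to isolate, for each residue of $n$ modulo $3$, the partition(s) of $n$ of largest part-product $M(n)$: all parts $3$ when $3\mid n$; parts $3$ and one part $4$ (equivalently two parts $2$) when $n\equiv1$; parts $3$ and one part $2$ when $n\equiv2$. Writing $T_{\ell}(n)$ for the associated term(s), positivity of all summands gives $N_{\ell}(n)\ge T_{\ell}(n)$, strictly, since $(1^{n})$ contributes too.

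For matching upper bounds at $n\pm1$ one would establish the elementary inequalities $g_{\ell}(2)\le g_{\ell}(3)$, $g_{\ell}(4)\le g_{\ell}(2)^{2}$, and more generally $g_{\ell}(j)\le g_{\ell}(3)^{a_{j}}g_{\ell}(2)^{b_{j}}$ for $j\ge2$ and all $\ell$ above a small absolute constant, where $(3^{a_{j}},2^{b_{j}})$ is the canonical partition of $j$ into $2$'s and $3$'s; together with the monotonicity $g_{\ell}(3)^{a_{m-1}}g_{\ell}(2)^{b_{m-1}}\le g_{\ell}(3)^{a_{m}}g_{\ell}(2)^{b_{m}}$, this yields $N_{\ell}(m)\le p(m)\,g_{\ell}(3)^{a_{m}}g_{\ell}(2)^{b_{m}}$ (plus a sharper version keeping the symmetry factor of the canonical partition, which is what is needed near $n=20$).

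Feeding the lower bound at $n$ and the upper bounds at $n\pm1$ (respectively the reverse for $n\equiv1$) into $\Delta_{\ell}(n)$ makes everything one-dimensional. Since $M(n)^{2}=\tfrac{9}{8} M(n-1)M(n+1)$ when $3\mid n$ and $M(n)^{2}=\tfrac{8}{9} M(n-1)M(n+1)$ when $n\equiv1$, the desired sign of $\Delta_{\ell}(n)$ is in both cases implied by
\[
\frac{g_{\ell}(3)^{2}}{g_{\ell}(2)^{3}}=\frac{(3^{\ell}-1)^{2}}{4(2^{\ell}-1)^{3}}\;>\;X(n),
\]
where a short computation identifies $X(n)$ with the bracketed quantity in $\kappa(n)$: $z_{\lambda_{0}}^{2}\,p(n-1)p(n+1)$ for $3\mid n$ and $2\,z_{\lambda_{0}}^{2}\,p(n)^{2}$ for $n\equiv1$, with $z_{\lambda_{0}}$ the symmetry factor of the all-$3$'s partition of $n$, resp.\ of $n-1$. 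One then bounds $\tfrac{g_{\ell}(3)^{2}}{g_{\ell}(2)^{3}}$ below by the affine function $1+\tfrac{\ell-1}{8}$ of $\ell$ — false for small $\ell$, but valid on the only range ever used, as $\kappa(n)$ is always far beyond the point where it takes hold — and solving $1+\tfrac{\ell-1}{8}\ge X(n)$ returns exactly $\kappa(n)=1+8(X(n)-1)$; strictness at $\ell=\kappa(n)$ follows from the strict versions of the bounds above.

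The case $n\equiv2$, where $M(n)^{2}=M(n-1)M(n+1)$, is the real obstacle: here the leading exponentials of $N_{\ell}(n)^{2}$ and $N_{\ell}(n-1)N_{\ell}(n+1)$ cancel, and the sign of $\Delta_{\ell}(n)$ is decided at the next order. For $n=3k+2$ the competition is between the factor $\tfrac{k+1}{k}$ from the ratio of symmetry factors and the factor $1+\tfrac{2g_{\ell}(4)}{g_{\ell}(2)^{2}}=\tfrac{7}{3}+\tfrac{2}{3(2^{\ell}-1)}\to\tfrac{7}{3}$ coming from $n-1\equiv1$ having two dominant partitions with $g_{\ell}(4)\ne g_{\ell}(2)^{2}$, and one gets
\[
\frac{N_{\ell}(n)^{2}}{N_{\ell}(n-1)N_{\ell}(n+1)}\ \longrightarrow\ \frac{6(k+1)}{7k},
\]
which is $>1$ iff $k<6$, equals $1$ iff $n=20$ ($k=6$), and is $<1$ for $n\ge23$ — exactly the trichotomy encoded in $\kappa$. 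For $n\ge23$ (resp.\ $n\le17$) one bounds the relevant $g_{\ell}$-ratio from below (resp.\ above) by an affine function and reads off $\kappa(n)$. At $n=20$ the limiting ratio is $1-\tfrac{2}{7\cdot2^{\ell}-5}$, beating $1$ only by $\Theta(2^{-\ell})$, so one must push the expansion one more order and retain all subdominant partitions of $19,20,21$; the exact rational arithmetic this entails is what forces the structurally different $\kappa(20)$ (with its constant $15$ and the prime $211$). The hardest part will be exactly this $n\equiv2$ analysis, and above all $n=20$, where the cancellation of leading terms means the conclusion rests entirely on carefully tracked lower-order contributions; a secondary technical point is making the thresholds in the $g_{\ell}$-inequalities of the second step fully explicit.
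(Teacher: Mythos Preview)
Your plan is sound and lands on exactly the same $\kappa(n)$, but the route you take for $n\equiv 0,1\pmod 3$ is genuinely different from the paper's. The paper never manipulates products of $g_\ell(2)$ and $g_\ell(3)$; instead it uses the crude pointwise sandwich
\[
m^{\ell-1}\le g_\ell(m)\le g_2(m)\,m^{\ell-1},
\]
which immediately converts every partition contribution into a power of $\prod_i m_i$. This reduces the problem to the integer quantities $M_1(n)$, yielding the lower bound $A_\ell(n)$ and the upper bound $p(m)\,M_1(m)^{\ell-1}$; the ratio $M_1(n)^2/(M_1(n-1)M_1(n+1))=9/8$ (resp.\ $8/9$) then produces $(9/8)^{\ell-1}$ directly, and Bernoulli's inequality $(9/8)^{\ell-1}\ge 1+(\ell-1)/8$ holds for \emph{all} $\ell\ge 1$. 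Your approach keeps the $g_\ell$'s intact and hinges on two families of auxiliary inequalities --- $g_\ell(j)\le g_\ell(3)^{a_j}g_\ell(2)^{b_j}$ (which in particular needs $g_\ell(2)^3\le g_\ell(3)^2$) and $g_\ell(3)^2/g_\ell(2)^3\ge 1+(\ell-1)/8$ --- both of which fail for small $\ell$ and only kick in past absolute thresholds (roughly $\ell\gtrsim 12$ and $\ell\gtrsim 25$ respectively). Because every $\kappa(n)$ vastly exceeds these thresholds, your argument still closes, but it carries the overhead of making those thresholds explicit and checking that $\kappa(n)$ dominates them; the paper's version sidesteps this entirely.

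For $n\equiv 2\pmod 3$ the two approaches are structurally the same: both isolate the genuine leading term (your $T_\ell$, the paper's $B_\ell(n)=C_1(n)M_1(n)^{\ell-1}$), observe that $M_1(n)^2=M_1(n-1)M_1(n+1)$ forces a cancellation, and compare the resulting constant $\tfrac{6}{7}\cdot\tfrac{n+1}{n-2}$ against $1$. The paper controls the remainder by the second-largest product $M_2(n)$, obtaining $B_\ell(n)\le N_\ell(n)\le B_\ell(n)+p(n)\,M_2(n)^{\ell-1}$, and then chooses the constants $71$ and $97$ so that $(1\pm\tfrac{1}{71})^2$ and $(1+\tfrac{1}{97})^2$ tip the inequality the right way for $n\le 17$ and $n\ge 23$; for $n=20$ it brings in $M_3(20)$ and the exact $C_2(19),C_2(20),C_2(21)$, and the prime $211$ arises from a cancellation among those rational coefficients. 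Your sketch captures the same mechanism (and your computation of the exact finite-$\ell$ ratio $1-\tfrac{2}{7\cdot 2^\ell-5}$ at $n=20$ is a nice sharpening), but to recover the specific constants $71$, $97$, $211$ in $\kappa(n)$ you will have to redo that bookkeeping rather than derive them from your $g_\ell$-ratio framework alone.
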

This leads to the following result:
\begin{theorem}
Let $1\leq n \leq 20$.
Then we have the
complete classification of
exceptions as in Table \ref{klassifikation}.
\begin{table}[H]
\begin{tabular}{|r|c|r|c|}
\hline
$n$ & 
& n&\\ \hline
$1$ & $2 \leq \ell$ &$11$& $2 \leq \ell \leq 35$\\
$2$ & $\emptyset $ &$12$& $ \emptyset$\\
$3$ & $2\leq \ell \leq 13$ &$13$&
$2 \leq \ell \leq 9$, $26 \leq \ell $ \\
$4$ & $16 \leq \ell$ &$14$& $20 \leq \ell \leq 44$
\\
$5$ & $2\leq \ell \leq 19$ &$15$& $ 2 \leq \ell \leq 8$\\
$6$ & $\emptyset $ &$16$&  $ 29 \leq \ell$\\
$7$ & $2 \leq \ell$ &$17$&   $2 \leq \ell \leq 6$, $24 \leq \ell \leq 54$
\\
$8$ & $15\leq \ell \leq 26$ &$18$&  $\emptyset$ \\
$9$ & $2\leq \ell \leq 12$ &$19$&  $2 \leq \ell \leq 4$, $32 \leq \ell $   \\ 
$10$ & $ 22 \leq \ell $ &$20$&  $27 \leq \ell$       \\ \hline
\end{tabular}
\caption{\label{klassifikation}Classification of exceptions for $1 \leq n \leq 20$
}
\end{table}
\end{theorem}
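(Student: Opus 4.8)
The plan is to reduce the assertion to an explicit finite check, using the effective bounds of Corollary~\ref{lineareschranken} at one end, direct evaluation at the other, and a zero-counting estimate for exponential sums to bridge the two.

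First I would make the numbers $N_{\ell}(n)$ explicit as functions of $\ell$. Theorem~\ref{th:BF} yields the recursion $m\,N_{\ell}(m)=\sum_{j=1}^{m}g_{\ell}(j)\,N_{\ell}(m-j)$ with $N_{\ell}(0)=1$, where $g_{\ell}(j)$ counts the sublattices of $\Z^{\ell}$ of index $j$. This function is multiplicative in $j$, and at a prime power $p^{a}$ it equals the Gaussian binomial $\binom{a+\ell-1}{a}_{p}$, which is a polynomial in $p^{\ell}$; hence each $g_{\ell}(j)$, and then by the recursion each $N_{\ell}(m)$ for $0\le m\le 21$, is a finite $\Q$-linear combination of exponentials $\rho^{\ell}$ with $\rho$ a positive rational. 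Consequently $\Delta_{\ell}(n)=N_{\ell}(n)^{2}-N_{\ell}(n-1)N_{\ell}(n+1)$ is itself an exponential sum $\sum_{i}\delta_{n,i}\,R_{n,i}^{\ell}$ with distinct positive reals $R_{n,i}$.

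Next I would settle the two ends of the range. For small $\ell$ I evaluate the exact integers $N_{\ell}(m)$, $m\le 21$, hence $\Delta_{\ell}(n)$, for $2\le\ell\le L$ with $L=60$ --- past every transition appearing in Table~\ref{klassifikation} --- thereby reading off the claimed exceptions for $\ell\le L$ and recording the precise set of $\ell\le L$ at which $\ell\mapsto\Delta_{\ell}(n)$ changes sign; the cases $n=1,2$ are immediate (e.g.\ $\Delta_{\ell}(1)=1-2^{\ell-1}$). For large $\ell$, Corollary~\ref{lineareschranken} provides a threshold $\kappa(n)$ beyond which $\Delta_{\ell}(n)>0$ when $n\equiv0\pmod3$, $\Delta_{\ell}(n)<0$ when $n\equiv1\pmod3$, and $\Delta_{\ell}(n)<0$ when $n\equiv2\pmod3$ and $n\ge20$; for $n\equiv2\pmod3$ with $2<n<20$ the eventual sign is positive, which is the content of Theorem~\ref{th:main} and follows from comparing the leading coefficients of $N_{\ell}(n)^{2}$ and $N_{\ell}(n-1)N_{\ell}(n+1)$.

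The remaining --- and crucial --- step is to close the gap $L<\ell<\kappa(n)$, which for several $n\le20$ is astronomically large. Here I would invoke the generalized Descartes rule: a real exponential sum $\sum_{i}\delta_{n,i}R_{n,i}^{\ell}$, viewed as a function of the real variable $\ell$, has at most as many real zeros as there are sign changes in the coefficient sequence $(\delta_{n,i})$ listed in order of decreasing $R_{n,i}$. For each $n\le 20$ I would compute that coefficient sequence and check that its number of sign changes equals the number of sign changes of $\Delta_{\ell}(n)$ already observed on $[2,L]$; since each observed integer sign change produces a distinct real zero in a unit interval, all real zeros then lie in $[2,L]$, so $\Delta_{\ell}(n)$ has constant sign for $\ell>L$, namely the sign found just past the last transition, which agrees with the sign given by Corollary~\ref{lineareschranken}. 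Thus no exception hides in $(L,\kappa(n))$, and assembling the three ranges reproduces Table~\ref{klassifikation} exactly. The main obstacle is precisely this last matching: one must control the exponential-sum representation of $\Delta_{\ell}(n)$ --- after all the cancellations among the many products of $g_{\ell}(j)$'s --- accurately enough that the Descartes bound on its sign changes is attained. For $n\equiv0,1\pmod3$ the dominant exponential of $\Delta_{\ell}(n)$ comes cleanly from a single one of the two products, so the coefficient sign pattern is short; the delicate cases are $n\equiv2\pmod3$, above all $n=17$ and $n=20$, where $N_{\ell}(n)^{2}$ and $N_{\ell}(n-1)N_{\ell}(n+1)$ share the same largest base and their leading coefficients nearly cancel, so the top of the expansion of $\Delta_{\ell}(n)$ competes with comparably sized lower-order terms and the (longer) coefficient sign pattern must be pinned down exactly.
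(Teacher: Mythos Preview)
Your plan differs from the paper's, and in a way that makes your job harder than it needs to be. You invoke only the linear thresholds $\kappa(n)$ of Corollary~\ref{lineareschranken}, which are indeed astronomical, and then propose a Descartes-type zero-counting argument to bridge $[60,\kappa(n)]$. The paper instead observes (Section~\ref{bessereschranken}) that the very same inequalities that produce $\kappa(n)$ --- \emph{before} one weakens $(9/8)^{\ell-1}$ to $1+(\ell-1)/8$ via Bernoulli --- already give logarithmic thresholds $L(n)$, with $\lceil L(n)\rceil\le 487$ for every $1\le n\le 20$ (Table~\ref{schrankenn}). So the paper's proof of Table~\ref{klassifikation} is simply: the sign of $\Delta_{\ell}(n)$ is determined analytically for $\ell\ge L(n)$ by the proof of Theorem~\ref{th:main}, and the finitely many values $2\le\ell<L(n)\le 487$ are checked directly with PARI/GP. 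No exponential-sum expansion and no zero counting enter.

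Your Descartes strategy is sound in principle, but it is not guaranteed to close: the generalized Descartes rule gives only an \emph{upper} bound on the number of real zeros of $\ell\mapsto\Delta_{\ell}(n)$, and your argument requires this bound to be \emph{attained} by the sign changes already visible on $[2,60]$. You rightly flag this as the main obstacle, especially for $n\in\{14,17,20\}$ where the dominant bases of $N_{\ell}(n)^{2}$ and $N_{\ell}(n-1)N_{\ell}(n+1)$ coincide and the subleading coefficient pattern is long. If for even one $n$ the coefficient sequence of $\Delta_{\ell}(n)$ has more sign changes than the number of transitions you observe, your bridge collapses and you are back to needing an effective upper bound on the last zero --- which is exactly what $L(n)$ provides. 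In short: what your approach buys is a smaller direct-computation window ($\ell\le 60$ instead of $\ell\le 487$) at the price of a delicate, case-by-case verification that may fail; what the paper's approach buys is certainty, since the analytic bound $L(n)$ is unconditional and the residual check is a bounded, routine computation.
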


\subsection{Log-concavity of $\left\{ N_{\ell}\left( n\right) \right\} $ for fixed $\ell$} \ \\
It is a well-known result
that the partition function $p(n)$ is log-concave, as established by
Nicolas \cite{Ni78}, and also by DeSalvo and Pak \cite{DP15}.
Consequently,
as $N_2(n)=p(n)$ the same log-concavity holds true for the sequence
$\{N_2(n)\}$.
Actually, $\Delta_2(n) >0$ for all even numbers $n$ and all odd numbers $n>25$.
Both proofs
rely on the Rademacher formula for partition numbers, involving the
Dedekind eta function.
Limited knowledge exists on $\ell >2$.
We refer to Table \ref{l=10} for the classification of all exceptions for $n \leq 10^5$
for $\ell=3$ and $n\leq 10^4$ otherwise.

\begin{table}[H]
\begin{tabular}{rlll}
\hline
$\ell $&log-concave&log-convex&proof/verification\\ \hline
$1$&$\mathbb{N}$&$\emptyset $&\\
$2$&$\{ n\in \mathbb{N}:n>25$ or $n$ even$\} $&$\{ n\in \mathbb{N}:1\leq n\leq 25$ odd$\}$&1978 Nicolas \\
$3$&$\{ n\in \mathbb{N}:n>21$ or $n$ even$\} $&$\{ n\in \mathbb{N}:1\leq n\leq 21$ odd$\} $&2024 H--N ($n\leq 10^{5}$)\\
$4$&$\{ n\in \mathbb{N}:n>19$ or $n$ even$\} $&$\{ n\in \mathbb{N}:1\leq n\leq 19$ odd$\} $&2024 H--N ($n\leq 10^{4}$)
\\
$5$&$\{ n\in \mathbb{N}:n>17$ or $n$ even$\} $&$\left\{
1,3,5,7,9,
11,13,15,17
\right\} $&$\vdots $\\
$6$&$\{ n\in \mathbb{N}:n>17$ or $n$ even$\} $&$\left\{
1,3,5,7,9,
11,13,15,17
\right\} $&$\vdots $\\
$7$&$\{ n\in \mathbb{N}:n>15$ or $n$ even$\} $&$\left\{ 1,3,5,7,9,11,13,15\right\} $&$\vdots $\\
$8$&$\{ n\in \mathbb{N}:n>15$ or $n$ even$\} $&$\left\{ 1,3,5,7,9,11,13,15\right\} $&$\vdots $\\
$9$&$\{ n\in \mathbb{N}:n>13$ or $n$ even$\} $&$\left\{ 1,3,5,7,9,11,13\right\} $&$\vdots $\\
$10$&$\{ n\in \mathbb{N}:n>11$ or $n$ even$\} $&$\left\{ 1,3,5,7,9,11\right\} $&2024 H--N ($n\leq 10^{4}$)\\ \hline
\end{tabular}
\caption{\label{l=10} Log-concavity patterns for $1\leq \ell \leq 10$}
\end{table}
Table \ref{l=10}
provides some evidence suggesting that for
a fixed $\ell$ the sequence $N_{\ell}(n)$ is log-concave for almost all $n$.
Additionally, we
refer to Table \ref{45}, where
we recorded the possible smallest $n_0=n_0(\ell)$ such that $N_{\ell}(n)$ is log-concave for all $n \geq n_0$.
\begin{table}[H]
\[
\begin{array}{rrrrrrrrrrr}
\hline
\ell _{1}\backslash \ell _{0}&1&2&3&4&5&6&7&8&9&10\\ \hline \hline
0&1&26&22&20&18&18&16&16&14&12\\
10&12&12&12&12&12&12&12&12&12&15\\
20&15&15&15&18&18&18&21&21&24&24\\
30&24&27&30&30&33&36&36&39&42&45\\ \hline
\end{array}
\]
\caption{\label{45}List of smallest $n_{0}$ such that $N_{\ell _{1}+\ell _{0}}\left( n\right) $ constitutes a log-concave sequence for all $n\geq n_{0}$ checked for $n\leq 10^{3}$}
\end{table}
\subsection{Exceptions for arbitrary $\ell$ large}
From our joint studies for large $\ell$ and $n$,
the following application arises.
\begin{corollary}
Let an arbitrary $N \in \mathbb{N}$ be given. Then there exists
a constant $L \in \mathbb{N}$, such that 
there exists an $n \geq N$ with
$\Delta _{\ell }\left( n\right) <0$ for all $\ell \geq L$.
\end{corollary}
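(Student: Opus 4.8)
The plan is to obtain this immediately from Corollary~\ref{lineareschranken}, once we commit to the right residue class for $n$. Given $N\in\N$, first choose $n\equiv 1\pmod 3$ with $n\geq\max\{N,4\}$; explicitly, let $n$ be the least such integer, so that $n\leq N+2$ as soon as $N\geq 4$. The reason for picking the class $1\bmod 3$ is that, by Theorem~\ref{th:main}, it is precisely the class on which $\{N_{\ell}(n)\}$ is \emph{strictly log-convex} at $n$ for all but finitely many $\ell$; the classes $0$ and $2$ would not do directly, since for $n\equiv 0$ one gets log-concavity and for $n\equiv 2$ the answer depends on whether $n<20$.

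Next, I would make the phrase ``for almost all $\ell$'' quantitative. For the chosen $n$ (with $n\equiv 1\pmod 3$), Corollary~\ref{lineareschranken} provides the explicit finite threshold
\[
\kappa(n)=1+8\left(\left(\left(\tfrac{n-1}{3}\right)!\,p(n)\right)^{2}2\cdot 3^{2(n-1)/3}-1\right),
\]
and asserts $\Delta_{\ell}(n)<0$ for every $\ell\geq\kappa(n)$. Since $n\equiv 1\pmod 3$, both $\tfrac{n-1}{3}$ and $\tfrac{2(n-1)}{3}$ are nonnegative integers, so $\kappa(n)\in\N$. Now set $L:=\kappa(n)$. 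Then $n\geq N$ by construction, and Corollary~\ref{lineareschranken} gives $\Delta_{\ell}(n)<0$ for all $\ell\geq L$. This is exactly the assertion, so the argument concludes once these choices are recorded.

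Since all the genuine work sits in Theorem~\ref{th:main} and its refinement Corollary~\ref{lineareschranken}, there is no real obstacle here; the only thing to verify is that $\kappa(n)$ is finite for each admissible $n$, which is visible from the closed form above. If one wishes to avoid the explicit bound entirely, one can argue from Theorem~\ref{th:main} alone: for $n\equiv 1\pmod 3$ the set $\{\ell:\Delta_{\ell}(n)\geq 0\}$ is finite, hence contained in $\{0,1,\dots,L-1\}$ for some $L$, and this $L$ works. Finally, I would note that the class $n\equiv 2\pmod 3$ with $n\geq 20$ would serve equally well via the last branch of $\kappa$, but the class $1\bmod 3$ is preferable because it imposes no lower bound on $n$ beyond the trivial one needed to stay in the range of Theorem~\ref{th:main}.
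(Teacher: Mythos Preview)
Your proof is correct and follows essentially the same approach as the paper: pick an $n\geq N$ in a residue class for which Corollary~\ref{lineareschranken} guarantees $\Delta_{\ell}(n)<0$ once $\ell\geq\kappa(n)$, and set $L=\kappa(n)$. The only difference is cosmetic: the paper selects $n\equiv 2\pmod 3$ with $n\geq 20$, whereas you choose $n\equiv 1\pmod 3$ (and you already note that the paper's choice works too).
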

This follows from Corollary \ref{lineareschranken}.
Let $n \geq 20$, $n \equiv 2 \pmod{3}$ and $n \geq N$. Let $ \ell \geq \kappa(n)$.
Then the result of Corollary \ref{lineareschranken} follows.
Further, we obtain:

\begin{corollary}
Let $\ell $ be given and $\left\{ N_{\ell }\left( n\right) \right\} $
be log-concave for almost all $n$.
Let $n_0(\ell)$ be given, such that $\Delta_{\ell}(n) >0$ for all $n \geq n_0(\ell)$.
Then the sequence $\{n_0(\ell)\}_{\ell}$ is unbounded.
\end{corollary}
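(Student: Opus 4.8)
The plan is to deduce this statement directly from the corollary immediately preceding it, by a short contradiction argument. First I would note that the hypothesis ``$\{N_{\ell}(n)\}$ is log-concave for almost all $n$'' means precisely that for each fixed $\ell$ the set $\{n\in\mathbb{N}:\Delta_{\ell}(n)<0\}$ is finite; hence a least integer $n_0(\ell)$ with $\Delta_{\ell}(n)>0$ for all $n\geq n_0(\ell)$ exists, and the assertion ``$\{n_0(\ell)\}_{\ell}$ is unbounded'' is to be read as: there is no single $M\in\mathbb{N}$ with $\Delta_{\ell}(n)>0$ simultaneously for all $\ell$ and all $n\geq M$. I would make this reading explicit at the start of the proof so that the statement is not vacuously about some arbitrarily large admissible choice of $n_0(\ell)$.

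Next I would assume, toward a contradiction, that such an $M$ exists, i.e.\ that $n_0(\ell)\leq M$ for every $\ell$. Applying the preceding corollary with the choice $N:=M$ produces a constant $L\in\mathbb{N}$ and an integer $n^{\ast}\geq M$ for which $\Delta_{\ell}(n^{\ast})<0$ holds for all $\ell\geq L$. Specialising to $\ell=L$ gives $\Delta_L(n^{\ast})<0$. On the other hand $n^{\ast}\geq M\geq n_0(L)$, so the defining property of $n_0(L)$ forces $\Delta_L(n^{\ast})>0$, a contradiction. Therefore no such $M$ exists, which is exactly the claim.

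I expect essentially no analytic obstacle here: all of the substantive work is already contained in the preceding corollary, which in turn rests on Corollary \ref{lineareschranken}, in particular on the case $n\equiv 2\pmod 3$, $n\geq 20$, where the explicit threshold $\kappa(n)$ guarantees $\Delta_{\ell}(n)<0$ for all $\ell\geq\kappa(n)$ while $n$ may be taken arbitrarily large. The only point that genuinely requires care is the bookkeeping of quantifiers: one must exploit the \emph{uniformity in} $\ell\geq L$ supplied by that corollary (a single $n^{\ast}$ that works for all large $\ell$), rather than a threshold that depends on $\ell$; otherwise the contradiction with a fixed bound $M$ would not close. Beyond that, the argument is purely formal and could be written in a few lines.
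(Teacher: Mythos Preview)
Your proposal is correct and matches the paper's approach: the paper presents this corollary without an explicit proof, simply writing ``Further, we obtain'' immediately after the preceding corollary, so the intended argument is precisely the short contradiction you outline, reducing to that corollary (and ultimately to the $n\equiv 2\pmod 3$, $n\geq 20$ case of Corollary~\ref{lineareschranken}). Your clarification that $n_0(\ell)$ should be read as the minimal admissible threshold, and your observation that the uniformity in $\ell\geq L$ is what makes the contradiction close, are both apt and make the paper's implicit reasoning explicit.
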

\section{Preliminaries}
Let $n,\ell \geq 1$. Let $g_{\ell}(n)$ denote the number of subgroups of $\mathbb{Z}^{\ell}$ of index $n$. Then
\begin{equation*}
g_{\ell}(n)= \sum_{d
\mid n} d \, g_{\ell -1}(d),
\end{equation*}
where $g_0(n)=1$ for $n=1$ and $0$ otherwise. The function $g_{\ell}$ is
multiplicative and satisfies the following for $p$ prime and $m \in \mathbb{N}$:
\begin{equation*}
g_{\ell}(p^m) = \frac{ \left(p^{\ell}-1 \right) \cdots
\left(p^{\ell + m -1}-1 \right)}{\left( p -1\right) \cdots
\left(p^m -1 \right)}.
\end{equation*}
Therefore, $g_1(n)=g_{\ell}(1)=1$, $g_{\ell}(2)=2^{\ell}-1$, and $g_{\ell}(4)= \frac{1}{3} \left(2^{\ell}-1\right) \, \left( 2^{\ell +1}-1\right)$. 
Let $\ell \geq 2$. Then 
\begin{equation*}
n^{\ell -1} \leq g_{\ell}(n) \leq n^{\ell} \leq \sigma(n) \, n^{\ell -1},
\end{equation*}
where  $\sigma \left( n\right) := \sum_{d
\mid n} d$.

Moreover, the
identity (Theorem \ref{th:BF}):
\begin{equation*}
\sum_{n=0}^{\infty} N_{\ell}(n) \, t^n = \exp \left( \sum_{n=1}^{\infty} g_{\ell}(n) \, \frac{t^n}{n} \right)
\end{equation*}
implies that we have the following two properties:
\begin{eqnarray}
N_{\ell}(n) & =  & 
\sum _{k\leq n}\sum _{\substack{m_{1},\ldots ,m_{k}\geq 1 \\ m_{1}+\ldots +m_{k}=n}}
\frac{1}{k!} \, \frac{g _{\ell}\left( m_{1}\right) 
\cdots g _{\ell}\left( m_{k}\right) }{m_{1}\cdots m_{k}},
\\
N_{\ell}(n) & = & \frac{1}{n} \, \sum_{k=1}^n \, g_{\ell}(k) \, N_{\ell}(n-k), \text{ where } N_{\ell}(0):=1.
\end{eqnarray}
A straightforward calculation leads to:

\begin{lemma} Let $\ell, n \geq 1$, then 
$\Delta_{1}(n) = 0$. Let $\ell >1$, then  
$\Delta_{\ell}(1)<0 $ and 
$\Delta_{\ell}(2) >0$. Further, let $\ell >1$, then  $\Delta_{\ell}(3) <0$
for $2 \leq \ell \leq 13$ and $\Delta_{\ell}(3) >0$ for $\ell \geq 14$.
\end{lemma}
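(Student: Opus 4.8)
The plan is to verify each claim by a direct, explicit computation of the small values $N_{\ell}(n)$ for $n\in\{1,2,3\}$ using the recursion (2.2), namely $N_{\ell}(n)=\tfrac1n\sum_{k=1}^{n}g_{\ell}(k)\,N_{\ell}(n-k)$ with $N_{\ell}(0)=1$, together with the tabulated values $g_{\ell}(1)=1$, $g_{\ell}(2)=2^{\ell}-1$, and $g_{\ell}(3)=3^{\ell}-1$ (the last from the multiplicativity formula $g_{\ell}(p^m)$ at $p=3$, $m=1$). First I would record $N_{\ell}(0)=1$, $N_{\ell}(1)=g_{\ell}(1)=1$, then $N_{\ell}(2)=\tfrac12\bigl(g_{\ell}(1)N_{\ell}(1)+g_{\ell}(2)N_{\ell}(0)\bigr)=\tfrac12\bigl(1+(2^{\ell}-1)\bigr)=2^{\ell-1}$, and finally $N_{\ell}(3)=\tfrac13\bigl(g_{\ell}(1)N_{\ell}(2)+g_{\ell}(2)N_{\ell}(1)+g_{\ell}(3)N_{\ell}(0)\bigr)=\tfrac13\bigl(2^{\ell-1}+(2^{\ell}-1)+(3^{\ell}-1)\bigr)$. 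One also needs $N_{\ell}(4)$ for the $\Delta_{\ell}(3)$ term; with $g_{\ell}(4)=\tfrac13(2^{\ell}-1)(2^{\ell+1}-1)$ the same recursion gives $N_{\ell}(4)=\tfrac14\bigl(N_{\ell}(3)+(2^{\ell}-1)N_{\ell}(2)+(3^{\ell}-1)N_{\ell}(1)+g_{\ell}(4)\bigr)$, which is again a closed-form expression in $2^{\ell}$ and $3^{\ell}$.

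Next I would assemble the four assertions. For $\ell=1$: from $g_1(n)=1$ for all $n$ one checks $N_1(n)=1$ for all $n$ (e.g. by induction on (2.2), since $\tfrac1n\sum_{k=1}^n 1=1$), hence $\Delta_1(n)=1^2-1\cdot1=0$. For $\Delta_{\ell}(1)$ with $\ell>1$: $\Delta_{\ell}(1)=N_{\ell}(1)^2-N_{\ell}(0)N_{\ell}(2)=1-2^{\ell-1}<0$ since $\ell>1$. For $\Delta_{\ell}(2)$: $\Delta_{\ell}(2)=N_{\ell}(2)^2-N_{\ell}(1)N_{\ell}(3)=4^{\ell-1}-\tfrac13\bigl(2^{\ell-1}+2^{\ell}-1+3^{\ell}-1\bigr)$; multiplying by $3$ this is $3\cdot 4^{\ell-1}-\tfrac32\cdot 2^{\ell}-3^{\ell}+2 = \tfrac34\cdot 4^{\ell}-3^{\ell}-3\cdot 2^{\ell-1}+2$, and since $\tfrac34\cdot 4^{\ell}$ dominates $3^{\ell}$ for all $\ell\ge 1$ (the ratio $(4/3)^{\ell}\cdot\tfrac34$ exceeds $1$ already at $\ell=1$ and is increasing), positivity for $\ell>1$ is immediate — I would just exhibit the inequality $\tfrac34\cdot4^{\ell}\ge 3^{\ell}+3\cdot2^{\ell-1}$ for $\ell\ge 2$, checked at $\ell=2$ and propagated by comparing growth rates.

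For the statement about $\Delta_{\ell}(3)$ this is where the only real work lies, and it is the main (though still elementary) obstacle. Writing everything out, $\Delta_{\ell}(3)=N_{\ell}(3)^2-N_{\ell}(2)N_{\ell}(4)$ becomes, after clearing the denominator $144$, a fixed polynomial $Q(u,v)$ in $u=2^{\ell}$ and $v=3^{\ell}$; substituting the closed forms above and expanding, the leading behaviour is governed by the $v^2=9^{\ell}$ coefficient. I expect the $9^{\ell}$ terms to cancel (they must, since both $N_{\ell}(3)^2$ and $N_{\ell}(2)N_{\ell}(4)$ contain $3^{\ell}$ only linearly — wait: $N_\ell(3)$ is linear in $3^\ell$, so $N_\ell(3)^2$ has a $9^\ell$ term $\tfrac19\cdot 9^\ell$, while $N_\ell(2)N_\ell(4)=2^{\ell-1}N_\ell(4)$ and $N_\ell(4)$ contains $3^\ell$ only through $(3^\ell-1)N_\ell(1)$, hence is linear in $3^\ell$, so $N_\ell(2)N_\ell(4)$ has no $9^\ell$ term). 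Thus $\Delta_{\ell}(3)$ has leading term $+\tfrac19\cdot 9^{\ell}$, so it is positive for large $\ell$; the sign change at $\ell=14$ is then pinned down by the subleading $8^{\ell}$, $6^{\ell}$, $\ldots$ contributions. The cleanest route is: (i) derive the exact formula $\Delta_{\ell}(3)=\tfrac{1}{144}\bigl(16\cdot 9^{\ell}-24\cdot 2^{\ell}3^{\ell}-12\cdot 3^{\ell}\cdot 2^{\ell}+\cdots\bigr)$ — I will not grind the coefficients here — (ii) bound the negative part crudely by a constant times $8^{\ell}$, (iii) conclude $\Delta_{\ell}(3)>0$ for all $\ell$ past some explicit threshold by comparing $9^{\ell}$ with that bound, and (iv) evaluate $\Delta_{\ell}(3)$ directly at the integers $\ell=2,\dots,14$ to certify that the sign is negative exactly for $2\le\ell\le 13$ and positive at $\ell=14$, matching the threshold from (iii). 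The monotonicity of the ratio $N_{\ell}(2)N_{\ell}(4)/N_{\ell}(3)^2$ in $\ell$ — which would let one replace the finite check by a single transition point — should follow from the dominant-term analysis, but the safe and short argument is simply the finite numerical verification together with the asymptotic lower bound.
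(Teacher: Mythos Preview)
Your approach is exactly what the paper has in mind: the lemma is introduced in the paper only with the phrase ``A straightforward calculation leads to'', so the intended proof is precisely the kind of explicit evaluation of $N_{\ell}(n)$ for $n\le 4$ via the recursion (2.2) that you outline, followed by a finite check for $\Delta_{\ell}(3)$ at small $\ell$ and a dominant-term argument for large $\ell$.

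There is, however, a concrete slip in your computation that you should fix before the rest goes through. From the prime-power formula
\[
g_{\ell}(p^{m})=\frac{(p^{\ell}-1)\cdots(p^{\ell+m-1}-1)}{(p-1)\cdots(p^{m}-1)}
\]
with $p=3$, $m=1$ one gets $g_{\ell}(3)=\dfrac{3^{\ell}-1}{2}$, not $3^{\ell}-1$. (Sanity check: $g_{2}(3)=\sigma(3)=4$.) With your value, $N_{\ell}(3)$ fails to be an integer already at $\ell=2$. The correct closed form is the one the paper records later,
\[
N_{\ell}(3)=\tfrac{1}{2}\cdot 3^{\ell-1}+2^{\ell-1}-\tfrac{1}{2},
\]
so the leading coefficient of $9^{\ell}$ in $\Delta_{\ell}(3)$ is $\tfrac{1}{36}$ rather than $\tfrac{1}{9}$. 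This also feeds into $N_{\ell}(4)$ through the $g_{\ell}(3)N_{\ell}(1)$ term. None of this changes the structure of your argument: the $9^{\ell}$ term still appears only in $N_{\ell}(3)^{2}$ and with positive sign, so the asymptotic positivity of $\Delta_{\ell}(3)$ and the finite check for $2\le\ell\le 14$ go through exactly as you describe once the constants are corrected.
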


\section{Proof of Theorem \ref{th:main}}

Let $n,\ell \geq 2$. It is well-known \cite{BM65,MM65,SS20,HN22} that the
maximum $M_{1}\left( n\right) $
over all products of partitions of $n$ is given by
\begin{equation*}\label{max:M}
M_{1}\left( n\right) :=\max _{k\leq n}
\max _{\substack{m_{1},\ldots ,m_{k}\geq 1 \\ m_{1}+\ldots +m_{k}=n}}m_{1}\cdots m_{k}=\left\{
\begin{array}{ll}
3^{n/3},&n\equiv 0\pmod{3},\\
4\cdot 3^{\left( n-4\right) /3},&n\equiv 1\pmod{3},\\
2\cdot 3^{\left( n-2\right) /3},&n\equiv 2\pmod{3}.
\end{array}
\right.
\end{equation*}
This maximum is obtained for the following values of $k$
dependent
on $n \pmod{3}$:
\begin{equation*}
k=\left\{
\begin{array}{ll}
n/3, & n\equiv 0\pmod{3}, \\
\left( n-1\right) /3 \text{ or }\left( n+2\right) /3,
 & n\equiv 1\pmod{3}, \\
\left( n+1\right) /3, & n\equiv 2\pmod{3}.
\end{array}
\right.
\end{equation*}
Moreover, for each of these values of $k$,
the number of tuples reaching the maximum
is given by:
\begin{equation*}
\left\{
\begin{array}{ll}
1, & n\equiv 0\pmod{3}, \\
\frac{n-1}{3} \text{ or }\binom{\left( n+2\right) /3}{2},
 & n\equiv 1\pmod{3}, \\
\frac{n+1}{3}, & n\equiv 2\pmod{3}.
\end{array}
\right.
\end{equation*}
Since $n^{\ell -1} \leq g_{\ell}(n) \leq g_{2}(n) \,\, n^{\ell-1}$, we obtain:
\begin{equation}\label{estimates}
A_{\ell }\left( n
\right) \leq N_{\ell}(n) \leq N_{2}\left( n\right) \, \left( M_{1}\left( n\right) \right) ^{\ell -1}.
\end{equation}
The values of $A_{\ell }\left( n
\right) $ are given by
\begin{equation*}
A_{\ell }\left( n
\right) =\left\{
\begin{array}{ll}
\frac{3^{\left( \ell - 2\right) n/3}}{\left( n/3\right) !},
&n\equiv 0\pmod 3,\\
\frac{3\left( 4\cdot 3^{\left( n-4\right) /3}\right) ^{\ell -
2}}{2\left( \left( n-4\right) /3\right) !},
&n\equiv 1\pmod 3,\\
\frac{\left( 2\cdot 3^{\left( n-2\right) /3}\right) ^{\ell -2}}{\left( \left( n-2\right) /3\right) !},&n\equiv 2\pmod
3.
\end{array}
\right.
\end{equation*}
We recall that $N_2(n)= p(n)$.

\subsection{The case $n\equiv 0 \pmod{3}$}

We obtain
via (\ref{estimates}):
\begin{eqnarray*}
\frac{N_{\ell}(n)^2 }{N_{\ell}(n-1) N_{\ell}(n+1)} &\geq &
\frac{3^{-2n/3}
}{  \left((n/3)!\right)^2 \, p(n-1)\, p(n+1)}
\,\, \left(\frac{9}{8}     \right)^{\ell-1}\\
&>&\frac{3^{-2n/3}}{\left( \left( n/3\right) !\right) ^{2}p\left( n-1\right) p\left( n+1\right) }\left( 1+\frac{\ell -1}{8}\right) .
\end{eqnarray*}
Therefore, $\Delta_{\ell}(n)>0$ for
$\ell \geq 1+8\left( \frac{\left( \left( n/3\right) !\right) ^{2}p\left( n-1\right) p\left( n+1\right) }{3^{-2n/3}}-1\right) $.
\subsection{The case $n \equiv 1 \pmod{3}$}
We obtain
via (\ref{estimates}):
\begin{eqnarray*}
\frac{N_{\ell}(n)^2 }{N_{\ell}(n-1) N_{\ell}(n+1)} 
&\leq &
\left( \left( \frac{n-1}{3}\right) ! \,\,\,
p\left(
n
\right)
\right) ^{2} \,2\cdot 3^{2\left( n-1\right) /3}
\, \left(   \frac{8}{9}     \right)^{\ell-1}\\
&<&\left( \left( \frac{n-1}{3}\right) !p\left( n\right) \right) ^{2}2\cdot 3^{2\left( n-1\right) /3}\left( 1+\frac{\ell -1}{8}\right) ^{-1}.
\end{eqnarray*}
Therefore, $\Delta_{\ell}(n)<0$ for
$\ell \geq 1+8\left( \left( \left( \frac{n-1}{3}\right) !p\left( n\right) \right) ^{2}2\cdot 3^{2\left( n-1\right) /3}-1\right) $.
\subsection{The case $n \equiv 2 \pmod{3}$}
This particular case is more delicate
compared to all other cases.

A
direct calculation
reveals the following:

\begin{lemma}
For fixed $n>1$ the leading term of $N_{\ell }\left( n\right) $ in terms
of $\ell $ is obtained by
$B_{\ell }\left( n\right) =C_{1}\left( n\right) \left( M_{1}\left( n\right) \right) ^{\ell -1}$,
defined by
\[
C_{1}\left( n\right) =\left\{
\begin{array}{ll}
\frac{1}{2^{n/3}\left( \frac{n}{3}\right) !},&n\equiv 0\pmod 3,\\
\frac{7}{6}\frac{1}{2^{\left( n-4\right) /3}\left( \frac{n-4}{3}\right) !},&n\equiv 1\pmod 3,\\
\frac{1}{2^{\left( n-2\right) /3}\left( \frac{n-2}{3}\right) !},&n\equiv 2\pmod 3.
\end{array}
\right.
\]
The bases of the second order growth terms are obtained by
$M_{2}\left( n\right) $ defined as
\[
M_{2}\left( n\right) =\left\{
\begin{array}{ll}
2,&n=3,\\
2^{3}\cdot 3^{\left( n-6\right) /3
},&n>3,n\equiv 0\pmod 3
,\\
3,&n=4,\\
10,&n=7,\\
2^{5}\cdot 3^{\left( n-10\right) /3},&n>7,n\equiv 1\pmod 3
,\\
5,&n=5,\\
2^{4}\cdot 3^{\left( n-8\right) /3},&n>5,n\equiv 2\pmod 3
.\\
\end{array}
\right.
\]
Then we obtain
\[
B_{\ell }\left( n\right) \leq N_{\ell }\left( n\right) \leq
B_{\ell }\left( n\right) +p\left( n\right) M_{2}\left( n\right) .
\]
\end{lemma}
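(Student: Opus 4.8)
The plan is to work with the exact partition expansion of $N_{\ell}(n)$ and read off its dependence on $\ell$. Grouping the ordered tuples in the identity $N_{\ell}(n)=\sum_{k\le n}\sum_{m_1+\dots+m_k=n}\frac{1}{k!}\,\frac{g_{\ell}(m_1)\cdots g_{\ell}(m_k)}{m_1\cdots m_k}$ according to the partition they represent gives $N_{\ell}(n)=\sum_{\lambda\vdash n}\frac{1}{z_{\lambda}}\prod_{j\ge 1}g_{\ell}(j)^{m_j(\lambda)}$, where $m_j(\lambda)$ is the multiplicity of $j$ in $\lambda$ and $z_{\lambda}=\prod_j j^{m_j(\lambda)}\,m_j(\lambda)!$. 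Next I would expand each factor via the multiplicative formula for $g_{\ell}$: for $j=\prod_i p_i^{a_i}$ one has $g_{\ell}(j)=\sum_t c_{j,t}\,\mu_{j,t}^{\,\ell}$, a fixed finite real combination of exponentials in $\ell$ whose largest base is $\mu_{j,1}=j$, with leading coefficient $c(j):=\prod_{p^a\|j}\frac{p^{a(a-1)/2}}{(p-1)(p^2-1)\cdots(p^a-1)}$. Two elementary facts drive everything: $0<c(j)\le 1$ for all $j$ (equality only for $j\in\{1,2\}$), and, term by term from $p^{\ell+s}-1<p^{\ell+s}$, the inequality $g_{\ell}(j)\le c(j)\,j^{\ell}$ for every $\ell$. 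I also record the small cases $g_{\ell}(2)=2^{\ell}-1$, $g_{\ell}(3)=\tfrac12(3^{\ell}-1)$, $g_{\ell}(4)=\tfrac13(2\cdot 4^{\ell}-3\cdot 2^{\ell}+1)$ and $c(2)=1,\ c(3)=\tfrac12,\ c(4)=\tfrac23$.

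Substituting these expansions into the partition sum and multiplying out, $N_{\ell}(n)$ becomes a finite real combination of powers $\nu^{\ell}$. A given $\lambda$ contributes only bases $\nu\le\prod(\text{parts of }\lambda)$, so the largest base occurring is $M_1(n)=\max_{\lambda\vdash n}\prod(\text{parts})$; moreover a base equal to $M_1(n)$ can arise only from an extremal $\lambda$ (one with $\prod(\text{parts})=M_1(n)$) in which every factor is in its top frequency, so the coefficient of $M_1(n)^{\ell}$ in $N_{\ell}(n)$ is exactly $\sum_{\lambda:\prod=M_1(n)}z_{\lambda}^{-1}\prod_j c(j)^{m_j(\lambda)}$. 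Feeding in the classical description of the extremal partitions recalled above (all parts $3$ when $n\equiv 0$; one $4$ with $(n-4)/3$ threes, or two $2$'s with $(n-4)/3$ threes, when $n\equiv 1$; one $2$ with $(n-2)/3$ threes when $n\equiv 2$) together with $c(2),c(3),c(4)$, a short computation identifies this coefficient with $C_1(n)/M_1(n)$ — in the $n\equiv 1$ case the two families of extremal partitions both contribute, producing the $\tfrac76=\tfrac23+\tfrac12$. Hence the $M_1(n)^{\ell}$-part of $N_{\ell}(n)$ equals $B_{\ell}(n)=C_1(n)M_1(n)^{\ell-1}$, which is the leading-term assertion.

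For the upper bound, split $N_{\ell}(n)=T_{\max}+T_{\mathrm{rest}}$ according to whether $\prod(\text{parts of }\lambda)$ equals $M_1(n)$ or is strictly smaller. Applying $g_{\ell}(j)\le c(j)j^{\ell}$ factorwise gives $T_{\max}\le\sum_{\lambda:\prod=M_1(n)}z_{\lambda}^{-1}\bigl(\prod_j c(j)^{m_j}\bigr)M_1(n)^{\ell}=B_{\ell}(n)$. For the rest, each such $\lambda$ has $\prod(\text{parts})\le M_2(n)$ — here $M_2(n)$ will be shown to equal the \emph{second}-largest value of $\prod(\text{parts})$ over $\lambda\vdash n$ — so, again by $g_{\ell}(j)\le c(j)j^{\ell}$, then $\bigl(\prod(\text{parts})\bigr)^{\ell}\le M_2(n)^{\ell-1}\prod(\text{parts})$, and finally $z_{\lambda}^{-1}\prod_j j^{m_j}=\bigl(\prod_j m_j(\lambda)!\bigr)^{-1}$, one gets $T_{\mathrm{rest}}\le M_2(n)^{\ell-1}\sum_{\lambda}\frac{\prod_j c(j)^{m_j}}{\prod_j m_j(\lambda)!}\le p(n)\,M_2(n)^{\ell-1}$, since every summand is $\le 1$ and there are fewer than $p(n)$ of them. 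Thus $N_{\ell}(n)\le B_{\ell}(n)+p(n)M_2(n)^{\ell-1}$.

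It remains to pin down $M_2(n)$ and to prove $B_{\ell}(n)\le N_{\ell}(n)$. For $M_2(n)$ I would combine the structure theory of maximal products (the references already cited) with the observation that no sub-leading frequency inside a $g_{\ell}(j)$ attached to an extremal $\lambda$ can land between $M_1(n)$ and the second-largest product: one such substitution replaces a part $j\in\{2,3,4\}$ by $\mu_{j,2}\le j/2$ (namely $1$ for $j\in\{2,3\}$, $2$ for $j=4$), lowering the base to at most $M_1(n)/2$, which is strictly below the second-largest product in every residue class. Hence the second-largest base with nonzero coefficient is the second-largest product of a partition of $n$; an elementary perturbation analysis (trading two $3$'s for three $2$'s is the least costly move) evaluates it as $\tfrac89 M_1(n)$ when enough $3$'s are available — i.e.\ $2^3 3^{(n-6)/3}$, $2^5 3^{(n-10)/3}$, $2^4 3^{(n-8)/3}$ for $n\equiv 0,1,2$ with $n$ large, with the exceptions $n\in\{3,4,5,7\}$ read off by hand — exactly the stated $M_2(n)$, and its coefficient $\sum_{\lambda:\prod=M_2(n)}z_{\lambda}^{-1}\prod_j c(j)^{m_j}$ is positive. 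For the lower bound, $B_{\ell}(n)-T_{\max}=\sum_{\lambda:\prod=M_1(n)}z_{\lambda}^{-1}\bigl(\prod_j(c(j)j^{\ell})^{m_j}-\prod_j g_{\ell}(j)^{m_j}\bigr)$ is, after expansion, a fixed combination of powers $\nu^{\ell}$ with $\nu\le M_1(n)/2$, hence $O\bigl((M_1(n)/2)^{\ell}\bigr)$, whereas $T_{\mathrm{rest}}$ dominates the positive contribution of a single second-best partition, which is a positive constant times $M_2(n)^{\ell}$; since $M_2(n)>M_1(n)/2$ this forces $N_{\ell}(n)=T_{\max}+T_{\mathrm{rest}}\ge B_{\ell}(n)$ for all $\ell$ past an explicit threshold, and the finitely many smaller $\ell$ are checked directly. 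The main obstacle is the $M_2(n)$ step: separating the two sources of sub-$M_1(n)^{\ell}$ growth — the lower frequencies inside the $g_{\ell}(j)$ for extremal partitions, versus the genuinely non-extremal partitions — and carrying out the product-maximisation bookkeeping uniformly in $n\bmod 3$ while isolating the exceptional small $n$; the clean inequality $g_{\ell}(j)\le c(j)j^{\ell}$ is what keeps this manageable.
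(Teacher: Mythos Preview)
Your argument follows the same route as the paper's: expand $N_\ell(n)$ over partitions of $n$ via the cycle-index identity, recognise that the $\ell$-dependence is governed by the product of the parts, and read off the leading and second-largest exponential bases together with their coefficients. The paper's proof is a bare sketch (``a direct calculation reveals\ldots''), while you supply the structural facts that make the calculation transparent, in particular the clean inequality $g_\ell(j)\le c(j)\,j^{\ell}$ with $c(j)\le 1$ and the verification that the sub-leading frequencies inside $g_\ell(2),g_\ell(3),g_\ell(4)$ never land between the second-largest product $M_2(n)$ and $M_1(n)$.

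Two small points. First, what you prove for the upper bound is $N_\ell(n)\le B_\ell(n)+p(n)\,M_2(n)^{\ell-1}$; this is exactly what the paper goes on to \emph{use}, so the printed ``$p(n)M_2(n)$'' in the lemma is evidently a typo and your version is the intended one. Second, your lower bound $B_\ell(n)\le N_\ell(n)$ is only fully justified asymptotically in $\ell$: you show $B_\ell(n)-T_{\max}=O\bigl((M_1(n)/2)^{\ell}\bigr)$ is eventually dominated by the positive $M_2(n)^{\ell}$-part of $T_{\mathrm{rest}}$, and then defer the remaining finitely many $\ell$ (for the fixed $n$ at hand) to a direct check. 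That is legitimate for the lemma as stated, and the paper's proof does not address this point at all; but if you want the inequality uniformly in $n$ and $\ell$ without case-checking, a sharper argument would be needed.
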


\begin{proof}
We obtain for $n\equiv 0\pmod{3}$ that the leading term
of
$N_{\ell }\left( n\right) $ in terms of $\ell $ is
$\frac{1}{2^{n/3}\left( \frac{n}{3}\right) !}\cdot 3^{
\left( \ell -1\right) n/3}
$.
For $n\equiv 1\pmod{3}$, $n\geq 4$, the
maximal growth term of
$N_{\ell }\left( n\right) $ with respect to $\ell $
is
$
2^{-\frac{n-4}{3}}
\frac{7
}{6\left( \left( n-4\right) /3\right) !}
\cdot \left( 4\cdot 3^{\frac{n-4}{3}}\right) ^{\ell -1}$.
For $n\equiv 2\pmod{3}$,
we obtain
$N_{\ell }\left( n\right) =
R+\frac{1}{\left( \left( n-2\right) /3\right) !}\cdot 2^{-\frac{n-2}{3}}\cdot \left( 2\cdot 3^{\frac{n-2}{3}}\right) ^{\ell -1}$,
where $R$ collects the terms of strictly slower growth. Similarly
for $M_{2}\left( n\right) $.
\end{proof}

Continuing
with the proof for $n\equiv 2\pmod 3$,
considering for
$n\equiv 2\pmod{3}$, $n\geq 5$, the terms of largest growth in $\ell $
are obtained as follows:
\begin{eqnarray*}
&&\frac{\left( N_{\ell }\left( n\right) \right) ^{2}}{N_{\ell }\left( n-1\right) N_{\ell }\left( n+1\right) }\\
&\geq &
\frac{\frac{\left( C_{1}\left( n\right) \right) ^{2}}{C_{1}\left( n-1\right) C_{1}\left( n+1\right) }\left( \frac{\left( M_{1}\left( n\right) \right) ^{2}}{M_{1}\left( n-1\right) M_{1}\left( n+1\right) }\right) ^{\ell -1}}%
{\left( 1+\frac{p\left( n-1\right) }{C_{1}\left( n-1\right) }\left( \frac{M_{2}\left( n-1\right) }{M_{1}\left( n-1\right) }\right) ^{\ell -1}\right) \left( 1+\frac{p\left( n+1\right) }{C_{1}\left( n+1\right) }\left( \frac{M_{2}\left( n+1\right) }{M_{1}\left( n+1\right) }\right) ^{\ell -1}\right) }\\
&\geq &\frac{6}{7}\frac{
n+1
}{
n-2
}\frac{1}{\left( 1+\frac{1}{71
}\right) ^{2}}>
\frac{6}{7}\frac{n+1}{n-2}\frac{35
}{36
}=\frac{5
}{6
}\frac{n+1}{n-2}\geq 1
\end{eqnarray*}
if $n\leq 17$ and
$\frac{p\left( n+s\right) }{C_{1}\left( n+s\right) }\left( \frac{M_{2}\left( n+s\right) }{M_{1}\left( n+s\right) }\right) ^{\ell -1}\leq \frac{1}{71}$
for $s=\pm 1$. This holds true if
$\ell \geq 1+\max _{s=\pm 1}\log _{M_{1}\left( n+s\right) /M_{2}\left( n+s\right) }\left( 71\frac{p\left( n+s\right) }{C_{1}\left( n+s\right) }\right) $.

Note that
\[
\frac{M_{1}\left( n-1\right) }{M_{2}\left( n-1\right) }=\left\{
\begin{array}{ll}
\frac{4}{3}\geq \frac{9}{8},&n=5,\\
\frac{12}{10}=\frac{6}{5}\geq \frac{9}{8},&n=8,\\
\frac{4\cdot 3^{\left( n-5\right) /3}}{2^{5}3^{\left( n-11\right) /3}}=\frac{9}{8},&n>8,n\equiv 2\pmod 3
\end{array}
\right.
\]
and
\[
\frac{M_{1}\left( n+1\right) }{M_{2}\left( n+1\right) }=
\frac{3^{\left( n+1\right) /3}}{2^{3}\cdot 3^{\left( n-
5\right) /3}}=\frac{9}{8}.
\]
Therefore,
\begin{eqnarray*}
\max _{s=\pm 1}\log _{M_{1}\left( n+s\right) /M_{2}\left( n+s\right) }\left( 71\frac{p\left( n+s\right) }{C_{1}\left( n+s\right) }\right) &\leq &\max _{s=\pm 1}\log _{9/8}\left( 71\frac{p\left( n+s\right) }{C_{1}\left( n+s\right) }\right) \\
&=&\log _{9/8}\left( 71
\cdot 2^{\left( n+1\right) /3}\left( \frac{n+1}{3}\right) !p\left( n+1\right) \right)
\end{eqnarray*}
since $C_{1}\left( n+1\right) <C_{1}\left( n-1\right) $
for $n\equiv 2\pmod 3$ and $n>2$.

In the opposite direction, we obtain
\begin{eqnarray*}
&&\left( N_{\ell }\left( n\right) \right) ^{2}-N_{\ell }\left( n-1\right) N_{\ell }\left( n+1\right) \\
&\leq &\frac{\left( C_{1}\left( n\right) \right) ^{2}}{C_{1}\left( n-1\right) C_{1}\left( n+1\right) }\left( \frac{\left( M_{1}\left( n\right) \right) ^{2}}{M_{1}\left( n-1\right) M_{1}\left( n+1\right) }\right) ^{\ell -1}\left( 1+\frac{p\left( n\right) }{C_{1}\left( n\right) }\left( \frac{M_{2}\left( n\right) }{M_{1}\left( n\right) }\right) ^{\ell -1}\right) ^{2}\\
&\leq &\frac{6}{7}\frac{n+1}{n-2}\left( 1+\frac{1}{9
7}\right) ^{2}<\frac{6}{7}\frac{n+1}{n-2}\frac{49}{48}=\frac{7}{8}\frac{n+1}{n-2}\leq 1
\end{eqnarray*}
if $n\geq 23$ and
$\frac{p\left( n\right) }{C_{1}\left( n\right) }\left( \frac{M_{1}\left( n\right) }{M_{2}\left( n\right) }\right) ^{\ell -1}\leq \frac{1}{97}$.
This holds true if
$\ell \geq 1+\log _{M_{1}\left( n\right) /M_{2}\left( n\right) }\left( 97\frac{p\left( n\right) }{C_{1}\left( n\right) }\right) =\log _{9/8}\left( 97\cdot 2^{\left( n-2\right) /3}\cdot \left( \frac{n-2}{3}\right) !p\left( n\right) \right) $
since
$
\frac{M_{1}\left( n\right) }{M_{2}\left( n\right) }=\frac{2\cdot 3^{\left( n-2\right) /3}}{16\cdot 3^{\left( n-8\right) /3}}=\frac{9}{8}
$.
In order to complete the analysis, let us consider the case $n=20
$. Including
the
third largest products as well,
we
derive the following through another straightforward calculation:

\begin{lemma}
In terms of the first three growth terms, we have
\[
N_{\ell }\left( 20\right) \leq C_{1}\left( 20\right) \left( M_{1}\left( 20\right) \right) ^{\ell -1}+C_{2}\left( 20\right) \left( M_{2}\left( 20\right) \right) ^{\ell -1}+p\left( 20\right) \left( M_{3}\left( 20\right) \right) ^{\ell -1}
\]
with
\[
C_{2}\left( 20\right) =\frac{
43}{4!
4!3!2^{3
}},\qquad M_{3}\left( 20\right) =5\cdot 3^{5}.
\]
With the first two growth terms, we
establish for $n\in \left\{ 19,21\right\} $
\[
N_{\ell }\left( n\right) \geq C_{1}\left( n\right) \left( M_{1}\left( n\right) \right) ^{\ell -1}+C_{2}\left( n\right) \left( M_{2}\left( n\right) \right) ^{\ell -1}
\]
with
\[
C_{2}\left( 19\right) =\frac{41}{6!2^{3}},\qquad C_{2}\left( 21\right) =\frac{1}{3!4!2^{5}}.
\]
\end{lemma}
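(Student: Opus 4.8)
The plan is to derive both estimates from the partition form of the generating identity in Section~2, namely
\[
N_{\ell}(n)=\sum_{\lambda\vdash n}\frac{1}{\prod_{j\ge1}r_j(\lambda)!}\ \prod_{i}\frac{g_{\ell}(m_i)}{m_i},
\]
valid for all $\ell\ge1$, where $\lambda$ runs over the partitions of $n$ with parts $m_1,m_2,\dots$ and $r_j(\lambda)$ is the multiplicity of $j$ in $\lambda$. The structural input is that $g_{\ell}(p^{a})=\prod_{j=0}^{a-1}\frac{p^{\ell+j}-1}{p^{j+1}-1}$; bounding each numerator factor by $p^{\ell+j}$ gives, for every $m$ and every $\ell\ge1$,
\[
0\le\frac{g_{\ell}(m)}{m}\le\gamma(m)\,m^{\ell-1},\qquad
\gamma(m):=\prod_{p^{a}\,\|\,m}\frac{p^{\binom a2}}{\prod_{j=1}^{a}(p^{j}-1)}\ \le\ 1,
\]
while the deficit $\delta_m:=\gamma(m)m^{\ell-1}-g_{\ell}(m)/m\ge0$ is an explicit combination of $2^{\ell},3^{\ell},\dots$ whose exponential order in $\ell$ is strictly below $m^{\ell-1}$ (the smallest gap being a factor $p^{\ell}$, with $p$ the least prime dividing $m$). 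Hence every summand grows in $\ell$ like $(\prod_i m_i)^{\ell-1}$, and the whole argument becomes a matter of sorting partitions of $n$ by the product of their parts.

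First I would carry out that combinatorial classification. Since $M_1(m)<M_2(n)$ whenever $m<n$ (immediate from the closed form of $M_1$), a partition of $n$ with part-product $\ge M_2(n)$ has no part equal to $1$, so the partitions realising the top few products can be found by a finite search among partitions into parts $\ge2$. This produces, for $n=20$, the unique maximiser $3^{6}2$, the product-$M_2(20)$ partitions $2^{4}3^{4}$, $4\,2^{2}3^{4}$ and $4^{2}3^{4}$, and the unique next one $3^{5}5$, with every other partition of $20$ having product at most $M_3(20)=5\cdot3^{5}$; for $n=19$, the maximisers $4\,3^{5}$ and $2^{2}3^{5}$, the product-$M_2(19)$ partitions $2^{5}3^{3}$, $4\,2^{3}3^{3}$ and $4^{2}2\,3^{3}$, and $5\,2\,3^{4}$ (product $810$) beating every remaining one; for $n=21$, the maximiser $3^{7}$, the product-$M_2(21)$ partitions $2^{3}3^{5}$ and $4\,2\,3^{5}$, and $2^{6}3^{3}$ (product $1728$) beating every remaining one.

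For the upper bound at $n=20$ I would apply the estimate $g_{\ell}(m)/m\le\gamma(m)m^{\ell-1}$ factor by factor, giving
\[
N_{\ell}(20)\le\sum_{\lambda\vdash20}\frac{\gamma(\lambda)}{\prod_j r_j(\lambda)!}\Bigl(\prod_i m_i\Bigr)^{\ell-1},\qquad\gamma(\lambda):=\prod_i\gamma(m_i).
\]
The term for $3^{6}2$ equals $\frac{\gamma(3)^{6}\gamma(2)}{6!}M_1(20)^{\ell-1}=C_1(20)M_1(20)^{\ell-1}$; the three product-$M_2$ terms, using $\gamma(2)=1$, $\gamma(3)=\tfrac12$, $\gamma(4)=\tfrac23$, sum to $\bigl(\tfrac1{9216}+\tfrac1{1152}+\tfrac1{1728}\bigr)M_2(20)^{\ell-1}=\tfrac{43}{27648}M_2(20)^{\ell-1}=C_2(20)M_2(20)^{\ell-1}$; and since $\gamma(\lambda)/\prod_j r_j(\lambda)!\le1$ for every $\lambda$ while all remaining partitions have product at most $M_3(20)$, what is left is at most $p(20)M_3(20)^{\ell-1}$. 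That is the claimed inequality.

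For the lower bound at $n\in\{19,21\}$ I would keep the contributions of the $M_1$- and $M_2$-partitions together with that of a single helper partition $\lambda_0$ --- $\lambda_0=5\,2\,3^{4}$ for $n=19$ and $\lambda_0=2^{6}3^{3}$ for $n=21$ --- and discard the remaining (nonnegative) terms. Writing each retained contribution as its leading monomial minus a deficit and using $\prod_i(a_i-\delta_i)\ge\prod_i a_i-\sum_i\delta_i\prod_{k\ne i}a_k$ with $a_i=\gamma(m_i)m_i^{\ell-1}$, one bounds the combined deficit of the extremal partitions by an explicit constant times $\rho(n)^{\ell-1}$, $\rho(n)$ being the worst deficit rate --- $\rho(19)=486$, $\rho(21)=972$ --- whereas $\lambda_0$ contributes at least an explicit positive constant times $(\prod\lambda_0)^{\ell-1}$ with $\prod\lambda_0>\rho(n)$ in both cases ($810>486$, $1728>972$). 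Hence the helper term overtakes the total deficit once $\ell$ exceeds a small explicit threshold, which gives $N_{\ell}(n)\ge C_1(n)M_1(n)^{\ell-1}+C_2(n)M_2(n)^{\ell-1}$, the finitely many remaining $\ell$ being checked directly (the right side is negligible next to $N_\ell(n)$); the exact coefficients $C_1(19)=\tfrac7{23040}$, $C_2(19)=\tfrac{41}{5760}$, $C_1(21)=\tfrac1{645120}$, $C_2(21)=\tfrac1{4608}$ come out of summing leading coefficients over the partitions listed above, exactly as in the upper bound. The hard part is this last compensation step: bounding the deficit of the extremal partitions cleanly and checking that one helper partition (or, if the constants turn out tight, a handful of them) already absorbs it uniformly in $\ell$; the factorwise bound on $g_{\ell}(p^{a})/p^{a}$ and the finite enumeration are routine, although the enumeration must be done with care, since a single overlooked partition of product $M_2(n)$ or $M_3(n)$ would change the asserted constants.
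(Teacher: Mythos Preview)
Your proof is correct and follows essentially the same route as the paper's: both extract the inequalities from the partition expansion
\[
N_{\ell}(n)=\sum_{\lambda\vdash n}\frac{1}{\prod_j r_j(\lambda)!}\prod_i\frac{g_{\ell}(m_i)}{m_i},
\]
sort the partitions by the product of their parts, and identify the exact leading coefficients. Your enumerations of the $M_1$-, $M_2$- (and for $n=20$, $M_3$-) partitions and the resulting constants $C_1,C_2$ all check out against the paper's values.

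Where you go beyond the paper is in the lower-bound direction for $n\in\{19,21\}$. The paper's proof simply names the two leading terms and declares the inequality; but since the actual contribution of each extremal partition is strictly \emph{less} than its leading monomial $\gamma(\lambda)(\prod m_i)^{\ell-1}$, the bound $N_\ell(n)\ge C_1 M_1^{\ell-1}+C_2 M_2^{\ell-1}$ does not follow merely from nonnegativity of the remaining summands. Your helper-partition device (using $5\cdot 2\cdot 3^4$ at $n=19$ and $2^6 3^3$ at $n=21$, whose products $810$ and $1728$ dominate the deficit rates $486$ and $972$) together with the Weierstrass-type inequality $\prod(a_i-\delta_i)\ge\prod a_i-\sum_i\delta_i\prod_{k\ne i}a_k$ closes this gap cleanly for large $\ell$, with a finite direct check for the small values. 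This is a genuine, if modest, sharpening of the paper's ``straightforward calculation.''
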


\begin{proof}
For $n=21
\equiv 0
\pmod{3}$,
the leading terms of $N_{\ell }\left( n\right) $ regarding $\ell $
are
$\frac{1}{2^{
7}\cdot 7
!}\cdot 3^{7\ell
-7}+\frac{1
}{6\cdot 2^{
5}\cdot 4
!}\left( 8\cdot 3^{
5}\right) ^{\ell -1}$.
For $n=19\equiv 1\pmod{3}$,
we obtain the leading first two terms of
$N_{\ell }\left( n\right) $ regarding $\ell $ as
$\frac{
41}{6!
}\cdot 2^{5\ell -8
}\cdot 3^{3\ell -3}+\frac{7}{2^{5}\cdot 6!}\cdot \left( 4\cdot 3^{5}\right) ^{\ell -1}$.
For $n=20\equiv 2
\pmod{3}$ we obtain the first two leading terms of
$N_{\ell }\left( n\right) $ regarding $\ell $ as
$2^{\ell -
7}\cdot 3^{6\ell -6}\cdot \frac{1}{6!}+2^{4\ell -
8}\cdot 3^{4\ell -4}\cdot \frac{43}{4!4!3}$.

For $M_{3}\left( 20\right) $ the calculation is similar.
\end{proof}

Therefore, regarding the proof for $n=20$, we establish
\begin{eqnarray*}
&&\left( N_{\ell }\left( 20\right) \right) ^{2}-N_{\ell }\left( 19\right) N_{\ell }\left( 21\right)
\\
&\leq &\left( C_{1}\left( 20\right) \left( M_{1}\left( 20\right) \right) ^{\ell -1}+C_{2}\left( 20\right) \left( M_{2}\left( 20\right) \right) ^{\ell -1}+p\left( 20\right) \left( M_{3}\left( 20\right) \right) ^{\ell -1}\right) ^{2}\\
&&{}-\left( C_{1}\left( 19\right) \left( M_{1}\left( 19\right) \right) ^{\ell -1}+C_{2}\left( 19\right) \left( M_{2}\left( 19\right) \right) ^{\ell -1}\right) \\
&&\phantom{{}-}\cdot \left( C_{1}\left( 21\right) \left( M_{1}\left( 21\right) \right) ^{\ell -1}+C_{2}\left( 21\right) \left( M_{2}\left( 21\right) \right) ^{\ell -1}\right) \\
&<
&-2^{5\ell -
15}\cdot 3^{10\ell -10}\cdot \frac{211}{7!3!6!}+\left( C_{2}\left( 20\right) \left( M_{2}\left( 20\right) \right) ^{\ell -1}
+
p\left( 20\right) \left( M_{3}\left( 20\right) \right) ^{\ell -1}\right) ^{2}\\
&&{}+C_{1}\left( 20\right) p\left( 20\right) \left( M_{1}\left( 20\right) M_{3}\left( 20\right) \right) ^{\ell -1}\\
&<&-\frac{211}{7!3!6!2^{10}}\left( 2^{5}3^{10}\right) ^{\ell -1}+\left( \left( C_{2}\left( 20\right) +p\left( 20\right) \right) ^{2}+2C_{1}\left( 20\right) p\left( 20\right) \right) \left( 10\cdot 3^{11}\right) ^{\ell -1}\leq 0
\end{eqnarray*}
if
\begin{eqnarray*}
\ell &\geq &1+\log _{16/15}\left( \frac{2^{10}\cdot 3!6!7!\left( \left( C_{2}\left( 20\right)
+p\left( 20\right) \right) ^{2}+2C_{1}\left( 20\right) p\left( 20\right) \right) }{211}\right) \\
&=&1+\log _{16/15}\left( \frac{3!6!7!2^{10}}{211}\left( \left( \frac{
43}{4!4!
3!2^{3
}}+p\left( 20\right) \right) ^{2}+\frac{p\left( 20\right) }{6!2^{5}}\right) \right)
\end{eqnarray*}
since
$\left( M_{2}\left( 20\right) \right) ^{2}=2^{8}3^{8}<10\cdot 3^{11}=M_{3}\left( 20\right) M_{1}\left( 20\right) $.

Recall that $N_{\ell }\left( 1\right) =1$ and
$N_{\ell }\left( 2\right) =2^{\ell -1}$ and we obtain
$N_{\ell }\left( 3\right) =
\frac{1}{2}\cdot 3^{\ell -1}+2^{\ell -1}-\frac{1}{2}$.
Therefore,
$\frac{N_{\ell }\left( 1\right) N_{\ell }\left( 3\right) }{\left( N_{\ell }\left( 2\right) \right) ^{2}}
>1$
for $\ell \geq 2$ and it is $\frac{2}{3}$
for $\ell =0$ and $1$ for $\ell =1$.
\subsection{\label{bessereschranken}Remark}
Note that from the proof
we can actually derive the following bounds
for $\ell \geq L\left( n\right) $:
\begin{equation*}
L \left( n\right) =\left\{
\begin{array}{ll}
1+
\log _{9/8}\left( \left( \left( n/3\right) !\right) ^{2}
p\left( n-1\right)
p\left( n+1\right)
3^{3n/2}\right) ,&n\equiv 0\pmod 3,\\
1+
\log _{9/8}\left( \left( \left( \frac{n-1}{3}\right) !p\left( n\right) \right) ^{2}
\cdot 2\cdot
3^{2\left( n-1\right) /3}\right) ,&n\equiv 1\pmod 3,\\
2,&n=2,\\
1+\log _{9/8}\left( 71\cdot 2^{\left( n+1\right) /3}\left( \frac{n+1}{3}\right) !p\left( n+1\right) \right) ,&n\equiv 2\pmod 3,\\
&2<n\leq 17,\\
1+\log _{16/15}\left( \frac{3!6!7!2^{10}}{211}\left( \left( \frac{1}{6!2^{6}}+p\left( 20\right) \right) ^{2}+\frac{p\left( 20\right) }{6!2^{5}}\right) \right) ,&n=20,\\
1+\log _{9/8}\left( 97\cdot 2^{\left( n-2\right) /3}\cdot \left( \frac{n-2}{3}\right) !p\left( n\right) \right) ,&n\equiv 2\pmod 3,\\
&n\geq 23.
\end{array}
\right.
\label{eq:bessereschranken}
\end{equation*}
For $1\leq n\leq 20$ this leads to the bounds in Table~\ref{schrankenn}.
\begin{table}[H]
\[
\begin{array}{rrrr}
\hline
n&\left\lceil L\left( n\right) \right\rceil &n&\left\lceil L\left( n\right) \right\rceil \\ \hline
1&7&11&125\\
2&2&12&203\\
3&40&13&214\\
4&53&14&152\\
5&76&15&264\\
6&90&16&274\\
7&102&17&179\\
8&99&18&326\\
9&146&19&336\\
10&157&20&487\\ \hline
\end{array}
\]
\caption{\label{schrankenn}Values of $\left\lceil L\left( n\right) \right\rceil $}
\end{table}

The proof of Corollary~\ref{lineareschranken} and
Table~\ref{klassifikation} is now complete, after
verifying the
remaing cases with PARI/GP.

\section{Data
availability
statement}

The datasets generated during and/or analysed during the current study are available from the corresponding author on reasonable request.

\section{Ethics
declarations}

\subsection{Conflict of
interest}

The authors declare that they have no conflict of interest.
\newline
\newline
{\bf Acknowledgments.}
The authors
express their gratitude
to Ken Ono for bringing together
their
shared research interests.

\end{document}